\documentclass{article}
\usepackage{tikz,xcolor,hyperref}
\usepackage{tikz}
\usepackage{amsmath,amscd}
\usepackage{amssymb}
\usepackage{theorem}
\usepackage{xcolor} 

\newtheorem{theorem}{Theorem}
\newtheorem{proposition}[theorem]{Proposition}
\newtheorem{corollary}[theorem]{Corollary}
\newtheorem{lemma}[theorem]{Lemma}

{\theorembodyfont{\rmfamily}%
  \newtheorem{example}[theorem]{Example}


\newenvironment{proof}{\noindent\textit{Proof.}}
{\QED\vskip\theorempostskipamount} 

\def\petitcarre{\vrule height4pt width 4pt depth0pt}
\def\QED{\relax\ifmmode\eqno{\hbox{\petitcarre}}\else{%
  \unskip\nobreak\hfil\penalty50\hskip2em\hbox{}\nobreak\hfil
  \petitcarre
  \parfillskip=0pt \finalhyphendemerits=0\par\smallskip}
  \fi}

\newcommand{\Z}{\mathbb{Z}}
\newcommand{\N}{\mathbb{N}}
\newcommand{\cL}{\mathcal L}
\newcommand{\A}{\mathcal A}
\newcommand{\B}{\mathcal B}
\newcommand{\CC}{\mathcal C}
\DeclareMathOperator{\Card}{Card}
\newcommand{\edge}[1]{\stackrel{#1}{\rightarrow}}

\newcommand{\ie}{{ that is, }}

\definecolor{lime}{HTML}{A6CE39}
\DeclareRobustCommand{\orcidicon}{%
	\begin{tikzpicture}
	\draw[lime, fill=lime] (0,0)
	circle [radius=0.16]
	node[white] {{\fontfamily{qag}\selectfont \tiny ID}};
	\draw[white, fill=white] (-0.0625,0.095)
	circle [radius=0.007];
	\end{tikzpicture}
	\hspace{-2mm}
}
\foreach \x in {A, ..., Z}{%
 \expandafter\xdef\csname orcid\x\endcsname{\noexpand%
 \href{https://orcid.org/\csname orcidauthor\x\endcsname}{\noexpand\orcidicon}}
}
\title{Unambiguously coded shifts}
\author{Marie-Pierre B\'eal\orcidA{} and Dominique Perrin and Antonio Restivo}
\begin{document}
\maketitle
\begin{abstract}
  We study the coded shifts introduced by Blanchard and
  Hansel \cite{BlanchardHansel1986}. We give several
  constructions which allow one to represent a coded shift
  as an unambiguous one.
\end{abstract}
\section{Introduction}

Coded shifts were introduced in \cite{BlanchardHansel1986}
as a generalization of irreducible sofic shifts. A shift
space $X$ is said to be coded by a prefix code $C$
if the factors of $X$ are the factors of $C^*$ (more explicit
definitions are given below). Recently
some interest has appeared for those coded shifts
which are unambiguously coded in
\cite{Pavlov2018} and \cite{BurrDasWolfYang2020}.
To be unambiguously coded by $C$
means that every infinite sequence of the shift $X$
has at most one factorization $\cdots c_{-1}c_0c_1\cdots$
with $c_n\in C$.

We investigate this notion and prove several results.
First of all, it follows from the work of Doris and Ulf-Rainer Fiebig
\cite{FiebigFiebig1992} that
every coded shift is unambiguously coded
(Theorem~\ref{theoremMain}).
This answers  a question raised in \cite{BurrDasWolfYang2020}.
Actually, only a weaker result is proved explicitly in
\cite{FiebigFiebig1992}, namely that every coded shift
can be recognized by a countable deterministic and co-deterministic
automaton \cite[Theorem 1.7]{FiebigFiebig1992}. We
reproduce here this result and its proof as Theorem~\ref{theoremFiebig1.7}.
We are indebted to Ulf-Rainer Fiebig for providing us a complete
proof of the stronger result, which is  indicated without
proof in \cite[Remark 1.8]{FiebigFiebig1992}. It is stated
here as Theorem~\ref{theoremFiebigRemark1.8} and
proved in full.

Unambiguously coded shifts can also be defined by automata
having the property of strong unambiguity.
We connect the notion of strong unambiguity with
that of recognizable morphism, which is central in the study
of shifts defined by morphisms (see~\cite{Queffelec2010}
or the more recent paper \cite{BertheSteinerThuswaldnerYassawi2019}). We develop this
aspect in a second paper \cite{BealPerrinRestivo2021}.

We also investigate synchronized shifts, which are defined by synchronized
prefix codes. We prove directly (that is, without using Theorem~\ref{theoremFiebigRemark1.8}) that every synchronized shift is unambiguously coded
(Theorem~\ref{theoremMain}). This allows us to prove that
every irreducible  sofic shift is unambiguously coded by a rational
prefix code (Corollary \ref{corollarySofic}).

We would like to thank Ulf-Rainer Fiebig for his
contribution and Francesco Dolce for reading the manuscript
and founding several mistakes.

\section{Languages and shift spaces}
Let $A$ be a finite alphabet. We denote by $A^*$ the set of words
on $A$ and by $\varepsilon$ the empty word. The
\emph{length} of a word $u$ is denoted $|u|$. A word $u$
is a \emph{factor} of $v$ if $v=pus$ for some words $p,s$.
It is  a \emph{prefix} of $v$ if $v=us$.
It is a \emph{proper prefix} if $s$ is nonempty
(that is, $u\ne v$).

A \emph{language} on the alphabet $A$ is a set of words on $A$.
For a language $U$, we denote by $U^*$ the set of (possibly empty)
words $u_1\cdots u_n$ with $u_i\in U$ and $n\ge 0$. A language
is \emph{rational} if it can be obtained from the subsets
of $A\cup\{\varepsilon\}$ by a finite number of unions,
set products and stars. 

An \emph{automaton} $\A$ on the alphabet $A$ is a graph
on a set $Q$ of vertices, called the \emph{states}
of $\A$ with edges labeled by $A$. Given two
sets $I,T$ of states called repectively the sets
of \emph{initial} and \emph{terminal} states,
the language \emph{recognized} by $\A$ is the set
of labels of paths from an element of $I$ to
an element of $T$. We denote $\A=(Q,i,T)$.


A \emph{deterministic automaton} on the alphabet $A$ is a set $Q$ with
a partial map $(q,a)\mapsto q\cdot a$ from $Q\times A$ to $Q$.
This map is extended to $(q,w)\mapsto q\cdot w$ by
associativity, that is $q\cdot wa=(q\cdot w)\cdot a$.
Thus a deterministic  automaton can be considered as a
particular case of automaton
with edges $p\edge{a}q$ whenever $p\cdot a=q$.

A co-determistic automaton is obtained from a deterministic one by
reverting the edges.

Given $i\in Q$ and $T\subset Q$, the determistic automaton \emph{recognizes}
the language $L=\{w\in A^*\mid i\cdot w\in T\}$.

An automaton is \emph{unambiguous}
if for every word $w$ and every pair of states $p,q$, there
is at most one path labeled $w$ from $p$ to $q$.
A deterministic automaton is unambiguous.

A \emph{code} on the alphabet $A$ is a set $C\subset A^+$ such that
every word on $A$ has at most one factorizations in words of $C$.
Formally, for every $n,m\ge 0$ and every $x_1,\ldots,x_n,y_1,\ldots,y_m$
in $C$,
the equality
\begin{displaymath}
  x_1\cdots x_n=y_1\cdots y_m
\end{displaymath}
implies $n=m$ and $x_i=y_i$ for $i=1,\ldots,n$.
The following well-known result relates unambiguous automata and codes.
A path from $i$ to $i$ in an automaton is \emph{simple} if
it does not pass by $i$ except at the origin and end of the path.
The following result is well known (see~\cite{BerstelPerrinReutenauer2009}).
\begin{theorem}
  Let $\A=(Q,i,i)$ be a strongly connected automaton and let $C$ be the set
  of labels of simple paths from $i$ to $i$. Then $\A$
  is unambiguous if and only if $C$ is a code.
  \end{theorem}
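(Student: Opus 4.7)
The plan is to prove both directions of the equivalence separately, using as a structural tool the fact that every path from $i$ to $i$ decomposes uniquely as a concatenation of simple paths: the pieces are precisely the sub-paths strung between successive visits of the path to the state $i$.

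For the direction ``$\A$ unambiguous implies $C$ is a code'', I would start from an arbitrary equality $x_1 \cdots x_n = y_1 \cdots y_m$ with all factors in $C$. For each codeword choose a simple path from $i$ to $i$ carrying that label, say $\sigma_1,\ldots,\sigma_n$ and $\tau_1,\ldots,\tau_m$. The two concatenated paths $\sigma_1\cdots\sigma_n$ and $\tau_1\cdots\tau_m$ share source $i$, target $i$, and total label, so unambiguity forces them to coincide. The uniqueness of the simple-path decomposition then yields $n=m$ and $\sigma_k=\tau_k$ for every $k$, whence $x_k=y_k$, showing $C$ is a code.

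For the converse I would argue by contraposition. Suppose $\A$ is not unambiguous and fix two distinct paths $\pi_1\neq \pi_2$ with common source $p$, common target $q$, and common label. Strong connectedness produces a path $\alpha$ from $i$ to $p$ and a path $\beta$ from $q$ to $i$; the concatenations $\alpha\pi_1\beta$ and $\alpha\pi_2\beta$ are two distinct paths from $i$ to $i$ sharing a label. Decomposing each into simple paths gives two $C^*$-factorizations of this common label; in the generic case these sequences of codewords already differ, directly contradicting the code property of $C$.

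The main obstacle is the degenerate case where the two simple-path decompositions produce identical codeword sequences yet remain distinct as paths, which forces some pair of simple paths $\sigma_k \ne \tau_k$ to share a label. To dispatch this case I would select a counterexample whose common label has minimal length: a first divergence point of $\sigma_k$ and $\tau_k$, together with a return path to $i$ supplied by strong connectedness from either divergent state, allows one to splice together two simple paths of strictly shorter label that still witness an ambiguity, contradicting minimality. Once this reduction is in place, the second implication is complete.
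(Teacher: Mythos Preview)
The paper does not actually prove this statement; it cites \cite{BerstelPerrinReutenauer2009} and moves on, so there is no paper proof to compare against. Your forward direction is correct. The reverse direction, however, has a genuine gap in the ``degenerate case'', and the gap cannot be filled: the implication ``$C$ is a code $\Rightarrow$ $\A$ is unambiguous'' is false for arbitrary strongly connected automata. Take $Q=\{i,1,2\}$ with edges $i\edge{a}1$, $i\edge{a}2$, $1\edge{b}i$, $2\edge{b}i$. This automaton is strongly connected, the set of labels of simple $i$-to-$i$ paths is $C=\{ab\}$ (trivially a code), yet the two simple paths $i\to 1\to i$ and $i\to 2\to i$ are distinct paths from $i$ to $i$ with the same label $ab$, so $\A$ is ambiguous.

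Your minimality-and-splicing sketch breaks exactly on this example. The two offending simple paths diverge at the very first edge and reconverge only at the last, so there is no nontrivial common prefix or suffix to strip off; whatever return path you append from a ``divergent state'' yields paths of the same length or longer, and no strictly shorter ambiguity is produced, so the minimality contradiction never fires. What makes the equivalence hold in the situations the paper actually uses (for instance the flower automaton $\A(\varphi)$ of a morphism) is the additional property that each $c\in C$ labels a \emph{unique} simple path from $i$ to $i$. Under that hypothesis your degenerate case is vacuous and the rest of your argument goes through cleanly; without it, the statement you are trying to prove is simply not true.
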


An automaton is \emph{strongly unambiguous} if the labelling of
bi-infinite paths is
injective, \ie it has at most one bi-infinite path with a given
bi-infinite label. A strongly connected automaton which
is strongly unambiguous is also unambiguous but the converse
is not true, as shown by the following example.

\begin{example}\label{exampleEven}
  Let $\A$ be the automaton represented in Figure~\ref{figureUnambiguous}.
  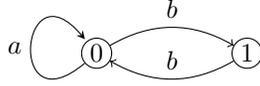
\begin{figure}[hbt]
    \centering
    \tikzset{node/.style={circle,draw,minimum size=0.4cm,inner sep=0.4pt}}
    \tikzset{title/.style={minimum size=0.5cm,inner sep=0.4pt}}
    \tikzstyle{every loop}=[->,shorten >=1pt,looseness=12]
    \tikzstyle{loop left}=[in=130,out=220,loop]
    \tikzstyle{loop right}=[in=-45,out=45,loop]
        \begin{tikzpicture}
          \node[node](0)at(0,0){$0$};\node[node](1)at(2,0){$1$};

          \draw[left,->,loop left,>=stealth](0)edge node{$a$}(0);
          \draw[above,->,bend left](0)edge node{$b$}(1);
          \draw[above,->,bend left](1)edge node{$b$}(0);
        \end{tikzpicture}
        \caption{An unambiguous automaton.}\label{figureUnambiguous}
  \end{figure}
  The automaton is unambiguous since it is deterministic. It is not
  strongly unambiguous because there are two bi-infinite paths
  labeled with letters $b$.

\end{example}

The property of being strongly unambiguous can easily be tested
on a finite automaton because of the following property.
For an automaton $\A$ on the alphabet $A$
with $Q$ as set of states, its square is the automaton $\A\times\A$
with $Q\times Q$ as set of states and edges $(p,q)\edge{a}(r,s)$
if $p\edge{a}r$ and $q\edge{a}s$ are edges of $\A$.
Its non-diagonal part is the restriction to the
states of the form $(p,q)$ with $p\ne q$.
A strongly connected component of an automaton is trivial
if its set of edges is empty.
\begin{proposition}
  An unambiguous automaton $\A$ is strongly unambiguous if and only
  if the strongly connected components of the non-diagonal
  part of $\A\times \A$ are trivial.
\end{proposition}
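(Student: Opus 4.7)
The plan is to prove the two directions separately, both by contrapositive, and to use finiteness of $\A$ (implicit in the preceding remark about testing on a finite automaton) to extract loops from infinite walks.

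For the ``only if'' direction, suppose the non-diagonal part of $\A\times\A$ contains a non-trivial strongly connected component $S$. Then $S$ contains at least one edge, and since it is strongly connected, that edge lies on a cycle entirely within $S$, giving a loop $(p,q)\ledge{w}(p,q)$ with $p\ne q$ and $|w|\ge 1$. Unfolding the definition of $\A\times\A$, this produces two loops $p\ledge{w}p$ and $q\ledge{w}q$ in $\A$. The periodic label $\cdots www\cdots$ then labels two bi-infinite paths in $\A$ (one visiting $p$ at the reference position, the other visiting $q$), and these paths are distinct because $p\ne q$, so $\A$ is not strongly unambiguous.

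For the ``if'' direction, suppose $\A$ is not strongly unambiguous, and take two distinct bi-infinite paths $\pi_1=(p_n)_{n\in\Z}$ and $\pi_2=(q_n)_{n\in\Z}$ carrying the same bi-infinite label $(a_n)_{n\in\Z}$. Set $D=\{n\in\Z : p_n=q_n\}$. The key observation, which uses unambiguity, is that $D$ must be an interval of $\Z$: if $n\in D$ and $p_{n+k}\ne q_{n+k}$ for some $k\ge 1$, then no $k'>k$ can satisfy $p_{n+k'}=q_{n+k'}$, for otherwise the restrictions of $\pi_1$ and $\pi_2$ to $[n,n+k']$ would be two distinct paths from $p_n$ to $p_{n+k'}$ with the same label, contradicting unambiguity; the analogous argument applies in the past direction.

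Since $\pi_1\ne\pi_2$, the interval $D$ is a proper subset of $\Z$, so its complement contains an infinite ray (to the right or to the left). On this ray, the sequence $((p_n,q_n))$ is a one-sided infinite walk in the non-diagonal part of $\A\times\A$. As $\A$ (hence $\A\times\A$) is finite, some non-diagonal state $(p,q)$ is visited infinitely often along this walk, and the segment between two such successive visits is a non-empty loop at $(p,q)$ in the non-diagonal part; thus $(p,q)$ belongs to a non-trivial strongly connected component there. The main obstacle is the interval lemma above: everything else is a straightforward pigeonhole argument on the finite product automaton, but the interval structure of $D$—and hence the fact that the witness walk eventually stays in the non-diagonal part—is what genuinely needs the hypothesis that $\A$ is unambiguous.
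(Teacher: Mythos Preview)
Your proof is correct and follows the same route as the paper's: both use unambiguity to control the set where the two paths disagree and then apply pigeonhole on the finite product automaton to extract a cycle. Your interval lemma for $D$ makes explicit a point the paper's proof glosses over---namely, that the repeated pair $(p_n,q_n)=(p_m,q_m)$ can be chosen so that the entire segment between them stays off the diagonal---so your argument is in fact the more carefully written of the two.
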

\begin{proof}
  Let $p_n\edge{a_n}p_{n+1}$ and $q_n\edge{a_n}q_{n+1}$
  be two distinct bi-infinite paths with the same label.
  Let $E=\{n\in\Z\mid p_n\ne q_n\}$. Since $\A$ is unambiguous,
  the set $E$ is infinite.  Since $Q$ is finite,
  there exist $n,m\in E$ with $n<m$ such that $(p_n,q_n)=(p_m,q_m)$.
  Thus, $\A\times\A$ contains a cycle
  $(p_n,q_n)\xrightarrow{a_n\cdots a_{m-1}}(p_m,q_m)$.
  The converse is clear.
  \end{proof}
\begin{example}
  Let $\A$ be the automaton of Example~\ref{exampleEven}.
  The non-diagonal part of the
  automaton $\A\times \A$ is shown in Figure~\ref{figureUnambiguous2}.
  It is strongly connected in agreement with the
  fact that $\A$ is not strongly unambiguous.
\begin{figure}[hbt]
    \centering
    \tikzset{node/.style={circle,draw,minimum size=0.4cm,inner sep=0.4pt}}
    \tikzset{title/.style={minimum size=0.5cm,inner sep=0.4pt}}
    \tikzstyle{every loop}=[->,shorten >=1pt,looseness=12]
    \tikzstyle{loop left}=[in=130,out=220,loop]
    \tikzstyle{loop right}=[in=-45,out=45,loop]
        \begin{tikzpicture}
          \node[node](0)at(0,0){$0,1$};\node[node](1)at(2,0){$1,0$};

          \draw[above,->,bend left](0)edge node{$b$}(1);
          \draw[above,->,bend left](1)edge node{$b$}(0);
        \end{tikzpicture}
        \caption{The square of $\A$.}\label{figureUnambiguous2}
  \end{figure}
  \end{example}

A language is \emph{recognizable} if it can be recognized by
a finite automaton. By Kleene's Theorem, a language
is recognizable if and only if it is rational (on all these notions,
see \cite{BerstelPerrinReutenauer2009}
or any textbook on formal languages).

For every language $L$, the \emph{minimal  automaton} of $L$
is the deterministic
automaton $\A(L)$ obtained as follows. For $u\in A^*$, denote
$u^{-1}L=\{v\in A^*\mid uv\in L\}$. The set $Q$ is the family of
nonempty sets $u^{-1}L$. Next, for $q=u^{-1}L$ and $a\in A$,
we define $q\cdot a=(ua)^{-1}L$ provided the right hand side is nonempty.
Then $\A(L)$ recognizes $L$ with the choice of $i=L$ and $T$
the family of sets $u^{-1}L$ containing $\varepsilon$.
A language is recognizable if and only if its minimal 
automaton is finite (actually, the minimal automaton
has the least possible number of states among all
deterministic automata recognizing $L$).

We consider the set $A^\Z$ of infinite two-sided sequences
of elements of $A$. It is a compact metric space for the distance
$d(x,y)=1/r(x,y)$ with
\begin{displaymath}
  r(x,y)=\min\{|n|\mid n\in\Z, x_n\ne y_n\}.
\end{displaymath}
The \emph{shift transformation} on $A^\Z$ is the map $S:A^\Z\to A^\Z$ defined
by $y=Sx$ if $y_n=x_{n+1}$ for all $n\in\Z$.

For a word  $u=u_0\ldots u_{p-1}\in A^*$ of length $p\ge 1$, we denote by $u^\infty$ the two-sided infinite
sequence $x\in A^\Z$ defined by $x_n=u_i$ whenever $i=n\bmod p$.
Such an element of $A^\Z$ is said to be a \emph{periodic point}.
For a sequence $(u_n)_{n\in \Z}$ of nonempty words, we denote by
\begin{displaymath}
  \cdots u_{-1}\cdot u_0u_1\cdots
\end{displaymath}
the two-sided infinite sequence $x$ such that
\begin{eqnarray*}
  \cdots x_{-2}x_{-1}&=&\cdots u_{-2}u_{-1}\\
  x_0x_1\cdots&=&u_0u_1\cdots.
\end{eqnarray*}
We also denote $w^\omega$ the one-sided infinite word
$www\cdots$, which is an element of $A^\N$ and by $^\omega w$ the
sequence $\cdots www$, which is an element of $A^{-\N}$.

A \emph{shift space} is a set $X$ of two-sided infinite sequences
on a finite alphabet $A$ which is closed and invariant by the shift
(see~\cite{LindMarcus2021} for the basic definitions of symbolic dynamics).

A shift space is a particular case of a \emph{dynamical system}
which is a pair $(X,T)$ formed of a compact metric space $X$
and a continuous invertible map $T:X\to X$.

If $X$ is a shift space, we denote by $\cL(X)$ the
\emph{language} of $X$, which is the set of finite factors
of the elements of $X$. It
follows from the definition that a shift space is defined by its language.
We denote by $\cL_n(X)$ the set of words of length $n$ in $\cL(X)$.

The language of a shift space $X$ is \emph{factorial} (that is, it
contains the factors of its elements) and extendable (that
is, for every $w\in \cL(X)$, there are letters $a,b\in A$
such that $awb\in\cL(X)$). Conversely, for every factorial
extendable language $L$, there is a shift space $X$ such that
$L=\cL(X)$.

A shift space $X$ on the alphabet $A$
is a shift \emph{of finite type} if there
is a finite set $W$ of words on $A$, such that $\cL(X)$
is the set of words without factor in $W$.

\begin{example}\label{exampleGolden}
  The set of sequences on $A=\{a,b\}$ without factor $bb$ is
  a shift space $X$ called the \emph{golden mean shift}.
  One has $\cL(X)=\{a,ba\}^*\{\varepsilon,b\}$.
  \end{example}

A shift space $X$ is \emph{irreducible} if for every
$u,v\in\cL(X)$ there exists a word $w$ such that $uwv\in\cL(X)$.
It is \emph{minimal} if it does not contain properly a nonempty
shift space. A minimal shift is irreducible but the converse is false.
For example, the golden mean shift is irreducible, but not minimal
since it contains $a^\Z$.

A shift space $X$ is called \emph{sofic} if $\cL(X)$ is a rational
language. As an equivalent definition, $X$ is sofic if
it is recognized by a finite automaton $\A$,
that is, it is the set of labels of two-sided infinite paths in
$\A$.
   \begin{figure}[hbt]
    \centering
    \tikzset{node/.style={circle,draw,minimum size=0.4cm,inner sep=0.4pt}}
    \tikzset{title/.style={minimum size=0.5cm,inner sep=0.4pt}}
    \tikzstyle{every loop}=[->,shorten >=1pt,looseness=12]
    \tikzstyle{loop left}=[in=130,out=220,loop]
    \tikzstyle{loop right}=[in=-45,out=45,loop]
    \begin{tikzpicture}
     \node[node](0)at(0,0){$0$};\node[node](1)at(2,0){$1$};

          \draw[left,->,loop left,>=stealth](0)edge node{$a$}(0);
          \draw[above,->,bend left](0)edge node{$b$}(1);
          \draw[above,->,bend left](1)edge node{$a$}(0);

          \node[node](0)at(5,0){$0$};\node[node](1)at(7,0){$1$};

          \draw[left,->,loop left,>=stealth](0)edge node{$a$}(0);
          \draw[above,->,bend left](0)edge node{$b$}(1);
          \draw[above,->,bend left](1)edge node{$b$}(0);
        \end{tikzpicture}
        \caption{The golden mean shift and the even shift.}\label{figureEven}
   \end{figure}
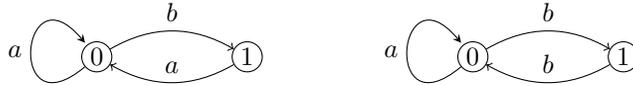
   \begin{example}
     The golden mean shift is sofic and it is recognized by
     the finite automaton of Figure~\ref{figureEven} on the left.
     
  The set $X$ of two-sided sequences on $\{a,b\}$ such that
  the number of consecutive $b$ between two $a$ is even
  is an irreducible sofic shift. 
This shift space, called the \emph{even shift}, is also
  recognized by the automaton of Figure~\ref{figureEven}
  on the right.
  We have $\cL(X)=\{a,bb\}^*\{\varepsilon,b\}$.
  The minimal automaton of $\cL(X)$ is shown in Figure~\ref{figureAutomate}
  (the state $0$ is initial and all states are terminal).
\begin{figure}[hbt]
    \centering
    \tikzset{node/.style={circle,draw,minimum size=0.4cm,inner sep=0.4pt}}
    \tikzset{title/.style={minimum size=0.5cm,inner sep=0.4pt}}
    \tikzstyle{every loop}=[->,shorten >=1pt,looseness=12]
    \tikzstyle{loop left}=[in=130,out=220,loop]
    \tikzstyle{loop above}=[in=45,out=135,loop]
    \begin{tikzpicture}
      \node[node](0)at(-2,0){$0$};
          \node[node](1)at(0,0){$1$};
          \node[node](2)at(2,0){$2$};
\draw[left,->,>=stealth,loop left](0)edge node{$a$}(0);
\draw[above,->,loop above,>=stealth](1)edge node{$b$}(1);
\draw[above,->,>=stealth](0)edge node{$b$}(1);
          \draw[above,->,bend left](1)edge node{$a$}(2);
          \draw[above,->,bend left](2)edge node{$a$}(1);
        \end{tikzpicture}
        \caption{The minimal automaton of $\cL(X)$.}\label{figureAutomate}
   \end{figure}
  
\end{example}

For any finite graph $G$, the \emph{edge shift} $X_G$ on $G$
is the set of biinfinite paths in $G$. It is a shift
of finite type. Conversely,  every shift of finite
type is the set of labels of bi-infinite paths in
 a strongly unambiguous automaton $\A$.

Let $X,Y$ be shift spaces on alphabets $A,B$ respectively.
Given   $m,n\ge 0$, a map $f:\cL_{n+m+1}(X)\to B$
is called a $n+m+1$-\emph{block map}. The \emph{sliding block code}
defined by $f$ is the map $\varphi:X\to B^\Z$ defined by
$y=\varphi(x)$ if $y_i=f(x_{i-m}\cdots x_{i+n})$ for all $i\in\Z$.
The simplest case corresponds to $n=m=0$. Such a code
is called a $1$-block code.
A \emph{factor map} $f:X\to Y$ is a map defined by a sliding block code
from $X$ onto $Y$.

A factor map $\varphi:X\to Y$ which is one-to-one is called
a \emph{conjugacy}.

\begin{example}\label{exampleFactorGolden}
  Consider the golden mean shift $X$. Let
  $f:\cL_2(X)\to \{a,b\}$ be the block map $aa\to a,ab\to b,ba\to b$. The
  sliding block code defined by $f$ is a factor map from $X$ onto the
  even shift.
\end{example}

Let $X$ be a shift space on $A$.  Recall that a point $x\in X$
  is \emph{periodic} if $S^n(x)=x$ for some $n\ge 1$. Otherwise,
  it is \emph{aperiodic}. A shift is aperiodic if it
  does not contain periodic points. For a minimal shift,
  being aperiodic is equivalent to being infinite.
\section{Morphisms}
Let $\varphi:A^*\to A^*$ be a morphism. The \emph{language} of $\varphi$,
denoted $\cL(\varphi)$, is the set of factors of the words $\varphi^n(a)$
for $n\ge 0$ and $a\in A$. The \emph{shift} generated by $\varphi$,
denoted $X(\varphi)$ is the set of sequences with all their factors
in $\cL(\varphi)$.

\begin{example}
  The morphism $\varphi:a\mapsto ab,b\mapsto a$ is called the
  \emph{Fibonacci morphism}. The corresponding shift $X(\varphi)$
  is called the \emph{Fibonacci shift}.
  
\end{example}

A morphism $\varphi:A^*\to A^*$ is \emph{primitive} if there is
an $n\ge 1$ such that every $a\in A$ appears in every $\varphi^n(b)$
for $b\in A$.

The following result is well known (see~\cite{Queffelec2010} for example).
\begin{theorem}
For every primitive morphism $\varphi$, the shift $X(\varphi)$ is minimal.
  \end{theorem}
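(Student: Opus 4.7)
The plan is to prove the equivalent characterization: $X(\varphi)$ is minimal iff every word $w\in\cL(\varphi)$ occurs in every $x\in X(\varphi)$, and in fact with bounded gaps. So I will show that for every $w\in\cL(\varphi)$ there is an integer $L$ such that every $u\in\cL(\varphi)$ with $|u|\ge L$ contains $w$ as a factor. This bounded-gap property immediately forces every point of $X(\varphi)$ to contain $w$, and hence $X(\varphi)$ to be minimal.

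First I would unfold the definitions. Fix $w\in\cL(\varphi)$; by definition $w$ is a factor of some $\varphi^k(a)$. Using primitivity, pick $n\ge 1$ so that every letter appears in every $\varphi^n(b)$, $b\in A$. Then $a$ occurs in $\varphi^n(b)$ for every $b$, which gives $\varphi^k(a)$ as a factor of $\varphi^{k+n}(b)$ for every $b\in A$. Setting $N=k+n$, this shows that $w$ is a factor of $\varphi^N(b)$ for \emph{every} letter $b$. Let $M=\max_{b\in A}|\varphi^N(b)|$.

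Next comes the key combinatorial step. For any $m\ge N$ and any $c\in A$, write $\varphi^{m-N}(c)=b_1b_2\cdots b_s$; applying $\varphi^N$ gives the block decomposition
\begin{displaymath}
\varphi^m(c)=\varphi^N(b_1)\,\varphi^N(b_2)\cdots\varphi^N(b_s).
\end{displaymath}
If $u$ is a factor of $\varphi^m(c)$ of length at least $2M+1$, then $u$ spans at least three consecutive blocks $\varphi^N(b_i)$, so it contains at least one full block $\varphi^N(b_j)$, and therefore contains $w$. Since every word of $\cL(\varphi)$ is a factor of some $\varphi^m(c)$ and, by primitivity, we may always take $m\ge N$ (because $|\varphi^m(c)|\to\infty$ unless $|A|=1$, a trivial case handled separately), the choice $L=2M+1$ works.

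Finally I would conclude: given $x\in X(\varphi)$ and $w\in\cL(\varphi)$, every window in $x$ of length $L$ lies in $\cL(\varphi)$ and therefore contains $w$; hence $w$ appears syndetically in $x$. As this holds for all $w\in\cL(\varphi)$, no proper closed shift-invariant subset of $X(\varphi)$ can exist, so $X(\varphi)$ is minimal. The only step requiring real care is the length bound in the block decomposition, i.e.\ verifying that any factor of length $>2M$ straddles a complete image block $\varphi^N(b_j)$; the rest is bookkeeping.
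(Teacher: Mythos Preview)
Your argument is correct and follows the standard route to this classical fact. Note that the paper itself does not supply a proof: the theorem is simply quoted as well known with a reference to \cite{Queffelec2010}. The uniform-recurrence argument you give---showing that for each $w\in\cL(\varphi)$ there is an $L$ such that every word of $\cL(\varphi)$ of length at least $L$ contains $w$, via the block decomposition $\varphi^m(c)=\varphi^N(b_1)\cdots\varphi^N(b_s)$ and the bound $L=2M+1$---is precisely the textbook proof one finds in the cited reference, so there is nothing to compare.
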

The automaton $\A(\varphi)$ associated to a morphism $\varphi:B^*\to A^*$ is a bouquet
of circles (that is,
a union of cycles sharing all the same vertex) labeled $\varphi(b)$ for $b\in B$. It has vertices
\begin{displaymath}
  \{(b,i)\mid b\in B,\ 0< i<|\varphi(b)|\}\cup\{\omega\}
\end{displaymath}
where $\omega$ is the vertex common to all cycles and edges
\begin{displaymath}
  E(\varphi)=\{[b,i]\mid b\in B,\ 0\le  i<|\varphi(b)|\}.
\end{displaymath}
The label of the edge $[b,i]$ is $a_i$ if $\varphi(b)=a_0\cdots a_k$. Its
source is
\begin{displaymath}
  s([b,i])=\begin{cases}(b,i)&\mbox{if $i\ne 0$}\\
  \omega&\mbox{otherwise}.
  \end{cases}
\end{displaymath}
Its range is
\begin{displaymath}
  r([b,i])=\begin{cases}(b,i+1)&\mbox{if $i+1<|\varphi(b)|$}\\
  \omega&\mbox{otherwise}.
  \end{cases}
\end{displaymath}

\begin{example}
  The automaton associated with the Thue-Morse morphism is the
  automaton of Figure~\ref{figureAutomatonThueMorse}.
\end{example}

\begin{example}
  The automaton associated with the Fibonacci morphism is the golden mean
  automaton of Figure~\ref{figureFiboAutomaton} on the left.
     \begin{figure}[hbt]
    \centering
    \tikzset{node/.style={circle,draw,minimum size=0.4cm,inner sep=0.4pt}}
    \tikzset{title/.style={minimum size=0.5cm,inner sep=0.4pt}}
    \tikzstyle{every loop}=[->,shorten >=1pt,looseness=12]
    \tikzstyle{loop left}=[in=130,out=220,loop]
    \tikzstyle{loop right}=[in=-45,out=45,loop]
    \begin{tikzpicture}
     \node[node](0)at(0,0){$0$};\node[node](1)at(2,0){$1$};

          \draw[left,->,loop left,>=stealth](0)edge node{$a$}(0);
          \draw[above,->,bend left](0)edge node{$a$}(1);
          \draw[above,->,bend left](1)edge node{$b$}(0);

        \end{tikzpicture}
        \caption{The automaton associated with the Fibonacci shift.}\label{figureFiboAutomaton}
   \end{figure}
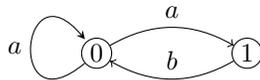
\end{example}

A morphism $\varphi:B^*\to A^*$ is \emph{circular} if it is injective and if for every $u,v\in A^*$
  \begin{displaymath}
    uv,vu\in\varphi(B^*)\Rightarrow u,v\in\varphi(B^*).
  \end{displaymath}
  The set $\varphi(B)$ is called a \emph{circular code}.
  There is a close connection between strong unambiguity
  and circular morphisms as shown by the following result
  (see~\cite{Restivo1975} for (i)$\Leftrightarrow$(ii)and
  \cite{Restivo1974} for (i)$\Leftrightarrow$ (iii)).
  \begin{theorem}
    Let $\varphi:B^*\to A^*$ be an injective morphism. The following conditions are equivalent.
    \begin{enumerate}
    \item[\rm(i)] $\varphi$ is circular.
      \item[\rm(ii)] The automaton $\A(\varphi)$
        is strongly unambiguous.
      \item[\rm(iii)] The closure under the shift of $\varphi(B^\Z)$ is of finite type.
        \end{enumerate}
  \end{theorem}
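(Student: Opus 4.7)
The plan is to handle the two equivalences (i)$\Leftrightarrow$(ii) and (i)$\Leftrightarrow$(iii) separately, since they rest on quite different arguments: the first is a direct translation between $\varphi$-factorizations and paths in the bouquet $\A(\varphi)$, while the second is the classical link between circular codes and shifts of finite type, for which I would follow a Restivo-style compactness argument.

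For (i)$\Leftrightarrow$(ii), I would first observe that a bi-infinite path in $\A(\varphi)$ visiting $\omega$ at two or more positions corresponds exactly to a factorization of its label as a concatenation of blocks $\varphi(b_n)$ with $b_n\in B$, the visits to $\omega$ being the cut positions. Two distinct bi-infinite paths with the same label thus produce two distinct such factorizations of the same element of $A^\Z$. Assuming a failure of (ii), I would pick two consecutive cut positions $p<q$ of the first factorization with no cut of the second strictly between them; the word between $p$ and $q$ then splits in two incompatible ways, yielding non-empty $u,v$ with $uv,vu\in\varphi(B^*)$ but at least one of them outside $\varphi(B^*)$, so (i) fails. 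Paths eventually trapped on a single cycle of $\A(\varphi)$ would be handled analogously on the resulting periodic label. Conversely, if circularity fails with $uv,vu\in\varphi(B^*)$ and, say, $u\notin\varphi(B^*)$, then $(uv)^\infty=(vu)^\infty$ admits two distinct factorizations and hence gives two distinct bi-infinite paths of $\A(\varphi)$ with the same label.

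For (i)$\Leftrightarrow$(iii), let $Y$ denote the closure under the shift of $\varphi(B^\Z)$. For (i)$\Rightarrow$(iii), I would prove by compactness that circularity forces a uniform bound $N$ such that every $w\in\cL(Y)$ of length at least $N$ determines its alignment within a $\varphi$-factorization: otherwise, counterexample factors of arbitrary length combined with K\"onig's lemma produce a bi-infinite word in $Y$ with two distinct $\varphi(B)$-factorizations, which by the previous paragraph contradicts (i). Once such an $N$ is available, $\cL(Y)$ is described by forbidding finitely many factors of length $N+1$, so $Y$ is of finite type. For (iii)$\Rightarrow$(i), a counterexample $u,v$ to circularity can be pumped: the words $(uv)^k$ lie in $\cL(Y)$ and admit two incompatible factorizations; choosing $k$ large enough that every length-$N$ window meets a cut of each factorization forces both alignments to extend to legal bi-infinite paths in $Y$, incompatible with the finite-type description.

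The main obstacle I anticipate is the (i)$\Rightarrow$(iii) direction: extracting a uniform synchronization bound from the qualitative circularity hypothesis. The extraction itself is a standard compactness argument, but one has to verify that the bi-infinite word obtained in the limit really carries two genuinely distinct $\varphi(B)$-factorizations, with care for the boundary case where one of the limit factorizations corresponds to a path that remains on a single cycle of $\A(\varphi)$; this case is absorbed into circularity by the observation that a periodic bi-infinite word with two distinct factorizations already exhibits the $uv,vu$ pattern on a single period.
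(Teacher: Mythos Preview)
The paper does not actually prove this theorem: immediately before the statement it writes ``see \cite{Restivo1975} for (i)$\Leftrightarrow$(ii) and \cite{Restivo1974} for (i)$\Leftrightarrow$(iii)'' and gives no argument of its own. There is therefore no in-paper proof to compare your plan against; your outline is essentially the classical route of those references.

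That said, two steps in your sketch would not go through as written. In the implication ``(ii) fails $\Rightarrow$ (i) fails'' you choose consecutive cuts $p<q$ of the first factorization with \emph{no} cut of the second strictly between them and claim that $x[p,q)$ then ``splits in two incompatible ways''; but with no intermediate cut of the second factorization, $x[p,q)$ sits inside a single block of the second and does not split at all. Even after correcting this, extracting $u,v$ with $uv,vu\in\varphi(B^*)$ directly from two bi-infinite factorizations is not immediate: the standard device is to use finiteness of $\A(\varphi)\times\A(\varphi)$ (pigeonhole) to pass to a \emph{periodic} word with two distinct $\varphi(B)$-factorizations, after which one period, cut at a point lying in one cut set but not the other, produces the required conjugate pair $pq,qp\in\varphi(B^*)$ with $p\notin\varphi(B^*)$.

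For (iii)$\Rightarrow$(i) your pumping description is too vague: the mere existence of two factorizations of $(uv)^k$ does not by itself conflict with any finite-type description, since a shift of finite type constrains factors, not parsings. The actual obstruction is that non-circularity allows one, for every $N$, to build a bi-infinite word $y$ whose length-$N$ factors all lie in $\cL(Y)$ (because near each window one of the two competing alignments is available) while $y$ itself admits no global $\varphi(B)$-factorization and hence lies outside $Y$. Your compactness argument for (i)$\Rightarrow$(iii) is the correct shape.
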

  \begin{example}
    The Fibonacci morphism is circular. Accordingly, the automaton $\A(\varphi)$
    represented in Figure~\ref{figureEven} is strongly unambiguous.
    The closure under the shift of $\varphi(\{a,b\}^\Z)$ is
    the golden mean shift.
    \end{example}
  \section{Relative unambiguity}
  Let $\A$ be a finite automaton on $A$ with $Q$ as set of states.
  Let $\Pi(\A)=\{(p_n,a_n,p_{n+1})\mid n\in\Z, p_n\edge{a_n}p_{n+1},p_n\in Q,a_n\in A\}$
  be the set of two-sided infinite paths in $\A$.
  The set $\Pi(\A)$ is a shift of finite type on the
  the alphabet $E$ formed by the edges of $\A$.
  Let $X$ be a subshift of $\Pi(\A)$.
  The automaton  is \emph{unambiguous on}  the shift $X$
  if for every sequence $y\in A^\Z$ there is at most one
  path in $X$ labeled $y$. Thus a strongly unambiguous automaton
  is the same as an automaton unambiguous on the 
  shift $\Pi(\A)$.
  The following statement shows that it is decidable whether
  an automaton is unambiguous on a sofic shift.
  \begin{proposition}\label{propositionUnambiguousOnSofic}
    Let $\A$ be a finite automaton on $A$ and let $X$
    be a sofic shift contained in $\Pi(\A)$.
    Let $\B$ be an automaton recognizing $X$.
    Let $\CC$ be the graph with edges $(p,q,r,s)\rightarrow(p',q',r',s')$
    where
    \begin{enumerate}
    \item[\rm(i)] $p\edge{e}p'$ and $q\edge{f}q'$ are edges of $\B$,
    \item[\rm(ii)] $e,f$ are edges of $\A$ of the form
      $r\edge{a}s$ and $r'\edge{a}s'$ for the same label $a\in A$.
    \end{enumerate}
    Then $\A$ is unambiguous on $X$ if and only if there
    is no biinfinite path in $\CC$ containing a vertex of the form
    $(p,q,r,s)$ with $r\ne s$.
  \end{proposition}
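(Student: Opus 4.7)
I would prove both implications by contrapositive, starting from the observation that $\CC$ is a product construction designed so that a biinfinite path in $\CC$ encodes a pair of biinfinite runs of $\B$ whose $A$-labels agree everywhere, together with the pair of $\A$-states visited by the corresponding paths in $\A$. Concretely, a vertex $(p,q,r,s)$ records the current $\B$-states $p,q$ of the two parallel runs and the current $\A$-states $r,s$ of the associated paths in $\A$; an edge of $\CC$ advances both runs by one synchronized step in which the two $\B$-edges are labelled by $\A$-edges sharing a common symbol $a\in A$. Since $\B$ recognizes $X$, biinfinite $\B$-paths are exactly elements of $X$, so biinfinite paths in $\CC$ are in bijection with ordered pairs of elements of $X$ having the same $A$-label, decorated with a witnessing pair of $\B$-lifts.

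Granting this picture, the forward direction is immediate: a biinfinite path in $\CC$ passing through a vertex $(p,q,r,s)$ with $r\ne s$ gives two elements of $X$ that agree as $A$-labelled sequences but whose underlying paths in $\A$ visit different states at the corresponding position, hence are distinct paths in $\A$, contradicting unambiguity of $\A$ on $X$. For the converse, assume $\A$ is not unambiguous on $X$: pick two distinct $x_1,x_2\in X$ with the same $A$-label. Lifting each $x_i$ to a biinfinite path in $\B$ and pairing the two lifts synchronously produces a biinfinite path in $\CC$. Because $x_1\ne x_2$ as paths in $\A$ and an edge of $\A$ is determined by its source, target, and label, the sequences of $\A$-states visited by $x_1$ and $x_2$ cannot coincide at every index, so at some position $n$ the third and fourth coordinates of the traversed vertex differ.

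The main bookkeeping is the translation between biinfinite paths in $\CC$ and compatible pairs of biinfinite paths in $\B$, namely checking that the synchronization condition (ii) in the definition of the edges of $\CC$ precisely enforces equality of $A$-labels at every position while letting the two underlying $\A$-paths evolve independently. The one delicacy I would flag explicitly is the step in the converse where $x_1\ne x_2$ is turned into a disagreement of $\A$-states: this uses that $\A$ has no parallel edges with the same source, target, and label, a convention implicit throughout the paper. Beyond this, the argument is a routine unwinding of the product construction.
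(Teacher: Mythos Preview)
Your proposal is correct and follows essentially the same approach as the paper: the paper proves the contrapositive of one implication by lifting two distinct equally-labelled elements of $X$ to paths in $\B$ and pairing them into a biinfinite path in $\CC$, then declares the other implication ``clear''. You spell out both contrapositives and make explicit the bookkeeping (including the observation that distinct $\A$-paths with the same $A$-label must differ in some visited state, which the paper uses without comment), but the argument is the same.
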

  \begin{proof}
    Assume that $\A$ is ambiguous on $X$. Let $x=(e_n)$ and $y=(f_n)$
    be two distinct elements of $X$ with the same label
    and thus such that $e_n:r_n\edge{a_n}r_{n+1}$
    and $f_n:s_n\edge{a_n}s_{n+1}$ in $\A$. Let $p_n\edge{e_n}p_{n+1}$
    and $q_n\edge{f_n}q_{n+1}$ be paths in $\B$. Since $x\ne y$, there
    is some $n_0$ such that $r_{n_0}\ne s_{n_0}$. Thus there
    is a biinfinite path $(p_n,q_n,r_n,s_n)$
    in $\CC$ containing the vertex $(p_{n_0},q_{n_0},r_{n_@},s_{n_0})$.
    The converse is clear.
    \end{proof}

  The previous construction takes a simpler
  form in the case of a shift of finite type.
  For an automaton $\A$ denote by $\lambda_\A(e)$
  the label of the edge $e$.
  \begin{proposition}
    Let $\A$ be a finite automaton on $A$ with $E$ as set of edges.
    Let $X$ be a  shift of finite type
    contained in $\Pi(\A$). Let $\B$ be a strongly unambiguous
    automaton recognizing $X$. Then $\A$ is unambiguous
    on $X$ if and only if the automaton $\mathcal C$ obtained
    from $\B$ by replacing every label $e\in E$ by $\lambda_\A(e)$
    is strongly unambiguous.
  \end{proposition}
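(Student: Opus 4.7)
The plan is to exploit the fact that $\CC$ and $\B$ share the same underlying graph, differing only by the relabeling of edges via $\lambda_\A$, and to combine this with the bijection between biinfinite paths of $\B$ and elements of $X$ that comes from $\B$ being strongly unambiguous and recognizing $X$.

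First I would set up the following correspondence. A biinfinite path $\pi$ of $\B$ is identified with a biinfinite path of $\CC$ on the same sequence of states and edges. Its $\B$-label is a sequence $x=(e_n)_{n\in\Z}\in E^\Z$, while its $\CC$-label is $y=(\lambda_\A(e_n))_{n\in\Z}\in A^\Z$. Since $\B$ recognizes $X$, we have $x\in X\subset\Pi(\A)$, so $x$ is itself a biinfinite path $\cdots\edge{a_{-1}}r_0\edge{a_0}r_1\edge{a_1}\cdots$ in $\A$ whose $\A$-label is exactly $y$. Hence the $\CC$-label of $\pi$ equals the $\A$-label of the element of $X$ that $\pi$ encodes.

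Next I would use the hypotheses on $\B$. Since $\B$ is strongly unambiguous and recognizes $X$, the map sending each biinfinite path of $\B$ to its $E$-label is a bijection onto $X$: surjective by recognition, injective by strong unambiguity. Composing with the trivial identification of $\B$- and $\CC$-paths, I obtain a bijection between biinfinite paths of $\CC$ and elements of $X$, under which the $\CC$-label of the path corresponds to the $\A$-label of the element of $X$.

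Both implications of the proposition then follow by direct translation. If $\CC$ fails to be strongly unambiguous, two distinct $\CC$-paths share an $A$-label, which yields two distinct elements of $X$ with the same $\A$-label, so $\A$ is ambiguous on $X$. Conversely, two distinct $x,x'\in X$ with the same $\A$-label correspond under the bijection to two distinct $\CC$-paths with the same $\CC$-label. The only thing requiring care is the bookkeeping to distinguish the $E$-labeling of $\B$-paths from the $A$-labeling of $\CC$-paths; no deeper obstacle appears, and the use of $X$ being of finite type is only implicit, in ensuring the existence of the strongly unambiguous recognizer $\B$ invoked as a hypothesis.
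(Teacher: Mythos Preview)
Your argument is correct and follows essentially the same approach as the paper: both proofs use that strong unambiguity of $\B$ gives a bijection between biinfinite paths of $\B$ and elements of $X$, and then observe that under this bijection the $\CC$-label of a path coincides with the $\A$-label of the corresponding element of $X$. The paper's proof is extremely terse (treating the converse as ``clear''), while you have spelled out both directions and the bijection explicitly; your remark that the finite-type hypothesis plays no direct role beyond guaranteeing the existence of $\B$ is also accurate.
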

  \begin{proof}
    Assume that $\mathcal C$ is strongly unambiguous.
    Since $\B$ is strongly unambiguous,
    two distinct elements of $X$ are the labels of distinct paths in $\B$.
    Thus, they cannot have the same label in $\A$.
    The converse is clear.
    \end{proof}

  \begin{example}
    Let $\A$ be the automaton of Figure~\ref{figureEven} on the right.
    Set $e=(0,a,0)$, $f=(0,b,1)$ and $g=(1,b,0)$. Let $X$
    be the shift of finite type recognized by the automaton $\B$ of Figure
    \ref{figureSuperEven} on the left. The automaton $\mathcal C$ obtained
    by replacing $e,f,g$ by their labels is represented on the right.
    It is strongly unambiguous and thus $\A$ is unambiguous on $X$.
       \begin{figure}[hbt]
    \centering
    \tikzset{node/.style={circle,draw,minimum size=0.4cm,inner sep=0.4pt}}
    \tikzset{title/.style={minimum size=0.5cm,inner sep=0.4pt}}
    \tikzstyle{every loop}=[->,shorten >=1pt,looseness=12]
    \tikzstyle{loop left}=[in=130,out=220,loop]
    \tikzstyle{loop right}=[in=-45,out=45,loop]
    \begin{tikzpicture}

          \node[node](0)at(5,0){$0$};\node[node](1)at(7,0){$1$};
\node[node](2)at(6,-1){$2$};
          \draw[left,->,loop left,>=stealth](0)edge node{$e$}(0);
          \draw[above,->,bend left](0)edge node{$f$}(1);
          \draw[above,->,bend left](1)edge node{$g$}(2);
          \draw[left,->,bend left](2)edge node{$e$}(0);

          \node[node](0)at(10,0){$0$};\node[node](1)at(12,0){$1$};
\node[node](2)at(11,-1){$2$};
          \draw[left,->,loop left,>=stealth](0)edge node{$a$}(0);
          \draw[above,->,bend left](0)edge node{$b$}(1);
          \draw[above,->,bend left](1)edge node{$b$}(2);
          \draw[left,->,bend left](2)edge node{$a$}(0);
        \end{tikzpicture}
        \caption{A shift of finite type.}\label{figureSuperEven}
   \end{figure}
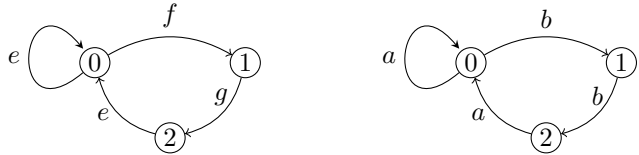
    \end{example}

  Let now $\varphi:B^*\to A^*$ be a morphism. We will show
  that for every shift space $X$ on $B$ there is a
  dynamical system $(X^\varphi,T)$ and a shift
  space $\alpha(X^\varphi)$ which is contained in the shift
  $\Pi(\A)$ of paths in the automaton $\A=\A(\varphi)$. Let
  for this
  \begin{equation}
    X^\varphi=\{(x,i)\mid x\in X,0\le i<|\varphi(x_0)|\}.
    \end{equation}
  We may consider on the space $X^\varphi$ the transformation
  \begin{displaymath}
    T(x,i)=\begin{cases}(x,i+1)&\mbox{if $i+1<|\varphi(x_0)|$}\\
    (S(x),0)&\mbox{otherwise.}
    \end{cases}
  \end{displaymath}
  In this way, $(X^\varphi,T)$ is a dynamical system.
  This system can actually be identified to a subshift of $\Pi(\A)$.
  Indeed, there is a unique map $\alpha:X^\varphi\to E(\varphi)^\Z$ such
  that
  \begin{equation}
(\alpha(x,i))_0=[x_0,i]\label{eqalpha}
  \end{equation}
  and
  \begin{equation}\alpha\circ T=S\circ \alpha.\label{eqIntertwin}
  \end{equation}
  Note that the map $\alpha$ is a $1$-block code.
  It is a homeomorphism
  from $X^\varphi$ into $\Pi(\A)$ and \eqref{eqIntertwin}
  shows that it identifies $(X^\varphi,T)$ with a subshift of $\Pi(\A)$.
  \begin{example}
\begin{figure}[hbt]
    \centering
    \tikzset{node/.style={circle,draw,minimum size=0.4cm,inner sep=0.4pt}}
    \tikzset{title/.style={minimum size=0.5cm,inner sep=0.4pt}}
    \tikzstyle{every loop}=[->,shorten >=1pt,looseness=12]
    \tikzstyle{loop left}=[in=130,out=220,loop]
    \tikzstyle{loop right}=[in=-45,out=45,loop]
    \begin{tikzpicture}
      \node[node](0)at(-2,0){$(b,1)$};\node[node](1)at(0,0){$\omega$};
      \node[node](2)at(2,0){$(a,1)$};

      \draw[above,->,bend left,>=stealth](0)edge node{$a$}(1);
      \draw[above,->,bend left,>=stealth](1)edge node{$b$}(0);
          \draw[above,->,bend left](1)edge node{$a$}(2);
          \draw[above,->,bend left](2)edge node{$b$}(1);

        \end{tikzpicture}
    \caption{An automaton unambiguous on the Thue-Morse shift}
    \label{figureAutomatonThueMorse}
\end{figure}
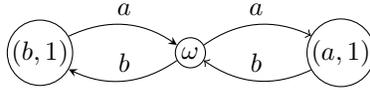
Let $\varphi:a\mapsto ab,b\mapsto ba$ be the Thue-Morse morphism.
The shift $X=X(\varphi)$ is called the \emph{Thue-Morse shift}.
The automaton $\A(\varphi)$ is represented in Figure~\ref{figureAutomatonThueMorse}. It is unambiguous on the shift $\alpha(X^\varphi)$.
Indeed, every word of length $5$ in $\cL(\varphi)$ contains
$aa$ or $bb$ and there is only one path labeled $aa$
or $bb$.
\end{example}

Let $X$ be a shift space on $B$. 
 A morphism $\varphi:B^*\to A^*$ is
 said to be \emph{recognizable on $X$} if for every $y\in A^\Z$,
 there is at most one pair $(x,k)$ with $x\in X$ and $0\le k<|\varphi(x_0)|$
 such that $y=S^k\varphi(x)$.

 Thus, in terms of the system $(X^\varphi,T)$, the morphism $\varphi$
 is recognizable on $X$ if and only if the map
 \begin{displaymath}
   \hat{\varphi}:(x,k)\mapsto S^k\varphi(x)
 \end{displaymath}
 is injective from $X^\varphi$ into $A^\Z$.

 This definition can also be formulated in terms of the unambiguity of the
 automaton $\A(\varphi)$ using the map $\alpha$ defined in Equation
 \eqref{eqalpha}.
 \begin{proposition}
   The morphism $\varphi:B^*\to A^*$ is recognizable on $X$
   if only if the automaton $\A(\varphi)$ is unambiguous
   on $\alpha(X^\varphi)$.
 \end{proposition}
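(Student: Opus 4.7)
The plan is to observe that the label map on the paths $\alpha(X^\varphi)$ coincides, via $\alpha$, with the map $\hat\varphi$. Since $\alpha$ is already a homeomorphism (hence injective), injectivity of $\hat\varphi$ will be equivalent to injectivity of the labelling on $\alpha(X^\varphi)$, which is exactly the statement that $\A(\varphi)$ is unambiguous on $\alpha(X^\varphi)$.

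More precisely, I would first verify the identity
\begin{displaymath}
  \lambda\circ\alpha \;=\; \hat\varphi,
\end{displaymath}
where $\lambda:\Pi(\A(\varphi))\to A^\Z$ sends a biinfinite path to its label. To check this, I evaluate the $0$-th coordinate: by definition of $\A(\varphi)$ the label of the edge $[x_0,k]$ is the $k$-th letter of $\varphi(x_0)$, so by \eqref{eqalpha} we get $\lambda(\alpha(x,k))_0=(S^k\varphi(x))_0$. The other coordinates then follow from the intertwining relation \eqref{eqIntertwin}, since $\lambda\circ S=S\circ\lambda$ on $\Pi$ and, by definition of $\hat\varphi$, one also has $\hat\varphi\circ T=S\circ\hat\varphi$.

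Once this identity is established, the equivalence is immediate. For the direct implication, suppose $\varphi$ is recognizable on $X$, i.e., $\hat\varphi$ is injective on $X^\varphi$. If two paths $\pi,\pi'\in\alpha(X^\varphi)$ share a common label $y\in A^\Z$, write $\pi=\alpha(x,k)$ and $\pi'=\alpha(x',k')$; then $\hat\varphi(x,k)=\lambda(\pi)=\lambda(\pi')=\hat\varphi(x',k')$, whence $(x,k)=(x',k')$ and so $\pi=\pi'$. Conversely, if $\A(\varphi)$ is unambiguous on $\alpha(X^\varphi)$ and $\hat\varphi(x,k)=\hat\varphi(x',k')$, then $\alpha(x,k)$ and $\alpha(x',k')$ are paths in $\alpha(X^\varphi)$ with the same label, hence equal; since $\alpha$ is injective this yields $(x,k)=(x',k')$.

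I expect no genuine obstacle here: the statement is essentially a translation lemma, and the only point that requires a bit of care is checking $\lambda\circ\alpha=\hat\varphi$ from the definitions. The verification is routine once one notes that both sides commute with the shift (via \eqref{eqIntertwin}) and agree at position $0$, so agreement at every index follows by applying $S^n$ and reading off the $0$-th coordinate.
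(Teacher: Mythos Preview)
Your proof is correct and follows essentially the same approach as the paper: the paper's proof simply asserts that for every $y\in A^\Z$ and $(x,k)\in X^\varphi$, one has $y=S^k\varphi(x)$ if and only if $y$ is the label of $\alpha(x,k)$, which is exactly your identity $\lambda\circ\alpha=\hat\varphi$. Your version is more explicit, supplying the verification via the $0$-th coordinate and the intertwining relation~\eqref{eqIntertwin}, and spelling out how the equivalence follows from the injectivity of $\alpha$; the paper leaves these details to the reader.
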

 \begin{proof}
   This follows from the fact that for every $y\in A^\Z$,
   and $(x,k)\in X^\varphi$, one has $y=S^k\varphi(x)$
   if and only if $y$ is the label of $\alpha(x,k)$.
   \end{proof}

  The following important result plays a central role
  in the study of shifts defined by morphisms (see~\cite{Queffelec2010}
  or \cite{DurandPerrin2021}). A morphism is called \emph{aperiodic}
  if the shift $X(\varphi)$ is aperiodic.
  \begin{theorem}[Moss\'e, 1992]
    A primitive aperiodic morphism $\varphi$ is
    recognizable on the shift $X(\varphi)$.
  \end{theorem}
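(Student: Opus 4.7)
The plan is a proof by contradiction via the reformulation provided by the previous proposition. Suppose $\varphi$ is not recognizable on $X=X(\varphi)$. Then there exist two distinct pairs $(x,k),(x',k')\in X^\varphi$ producing the same bi-infinite label $y=S^k\varphi(x)=S^{k'}\varphi(x')$; equivalently, $y$ admits two distinct factorizations into blocks from $\varphi(B)$, whose cut sequences differ at some position. By shifting $y$, I may assume the two cut sequences are distinct within the window $[0,M]$, where $M=\max_{b\in B}|\varphi(b)|$, and I let $r\in[1,M]$ denote the offset between the two decompositions at position $0$.

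The first step is to extract a minimal obstruction. Using compactness of $A^\Z$ together with minimality of $X$ (granted by primitivity and the previously cited theorem), I would take a limit along a subsequence of shifts of the pair $(x,k),(x',k')$ so that the offset $r$ becomes translation invariant in a precise sense: the offset $r$ recurs with bounded gaps along $\Z$, giving a point of $X$ with two decompositions whose relative offset is exactly $r$ at arbitrarily many positions.

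The heart of the proof --- and the main technical obstacle --- is Moss\'e's synchronization lemma: there exists $L=L(\varphi)$ such that the position of every $\varphi$-cut inside any factor $w\in\cL(\varphi)$ of length at least $L$ is determined by $w$ alone. I would establish it by first showing that $\varphi(B)$ is a code (otherwise a nontrivial equality $\varphi(b_1)\cdots\varphi(b_p)=\varphi(c_1)\cdots\varphi(c_q)$, iterated under $\varphi^n$ and combined with primitivity, would force periodicity of $X(\varphi)$, contradicting aperiodicity) and then bounding, by a pigeon-hole argument on proper prefixes and suffixes of the words $\varphi(b)$, the maximum possible overlap of two distinct $\varphi$-factorizations of the same word. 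A uniform bound valid for all powers $\varphi^n$ is what ultimately yields the constant $L$.

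With the synchronization lemma in hand, the contradiction is immediate: choose $N$ with $2N+1\ge L$; by minimality, the factor $y_{-N}\cdots y_{N}$ recurs in $y$ with bounded gaps, and each occurrence inherits two distinct $\varphi$-cut patterns with relative offset $r\ne 0$, violating the lemma. I expect the bulk of the work to be concentrated in the synchronization lemma, where the interplay between primitivity (which makes local overlaps of $\varphi$-images propagate everywhere) and aperiodicity (which forbids the resulting periodicity) must be made quantitative and uniform across all levels $\varphi^n$.
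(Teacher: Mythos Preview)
The paper does not prove this statement. Moss\'e's theorem is quoted as a known result, with references to \cite{Queffelec2010} and \cite{DurandPerrin2021}, and no argument is supplied; so there is no ``paper's own proof'' to compare your proposal against.

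As for your sketch on its own merits: the overall strategy---reduce non-recognizability to a persistent nonzero offset between two $\varphi$-factorizations of some $y\in X(\varphi)$, then invoke a synchronization lemma to obtain a contradiction---is indeed the shape of Moss\'e's original argument. However, your description of the synchronization lemma is where the real content lies, and your outline of it is not yet a proof. Two specific concerns. First, the statement you write (``the position of every $\varphi$-cut inside any factor $w\in\cL(\varphi)$ of length at least $L$ is determined by $w$ alone'') is essentially equivalent to recognizability itself, so invoking it to conclude recognizability risks circularity; Moss\'e's actual lemma is more delicate, comparing cuts that are \emph{sufficiently far from the boundary} of a long window, and its proof requires an inductive argument on the level $n$ of $\varphi^n$-images together with a careful use of bilateral recognizability. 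Second, your pigeon-hole sketch (``bounding the maximum possible overlap of two distinct $\varphi$-factorizations'') does not by itself exploit aperiodicity in the way needed: the crux is that a long overlap between shifted $\varphi$-factorizations forces a periodic pattern which, by primitivity, would propagate to all of $X(\varphi)$---and making this quantitative is exactly the hard step you have not carried out. If you want to complete the argument, consult Moss\'e's original paper or the detailed expositions in Queff\'elec or Durand--Perrin, where the induction and the role of the constant $L$ are made precise.
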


  \begin{example}
    The Fibonacci morphism is recognizable on $\{a,b\}^\Z$.
\end{example}
 \begin{example}
  The  Thue-Morse morphism
  is recognizable on the Thue-Morse shift.
 \end{example}

 Mosse's Theorem has been generalized to non primitive morphisms,
 as we shall see now.

  An automaton is unambiguous on $X$ \emph{for aperiodic points}
  if for every aperiodic $x\in X$ there is at most one path
  labeled $x$. A morphism $\varphi:B^*\to A^*$
  is recognizable on $X$ for aperiodic points if
  the automaton $\A(\varphi)$ is unambiguous on $X$
  for aperiodic points. The following result
  is from \cite{BertheSteinerThuswaldnerYassawi2019}.
  It generalizes Mosse's Theorem since, for an aperiodic morphism
  $\varphi$, the shift $X(\varphi)$ does not contain periodic points.
  
  \begin{theorem}[Berth\'e et al.,2019]
    A morphism $\varphi:A^*\to A^*$ is
    recognizable on $X(\varphi)$ for aperiodic points.
  \end{theorem}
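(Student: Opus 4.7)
The plan is to argue by contradiction. Suppose some aperiodic $y \in A^\Z$ is the common label of two distinct elements $(x,k), (x',k') \in X(\varphi)^\varphi$, so that $y = S^k\varphi(x) = S^{k'}\varphi(x')$. Via the identification $\alpha$ from Equation \eqref{eqalpha}, these correspond to two distinct bi-infinite paths $\pi,\pi'$ in $\A(\varphi)$ sharing the label $y$. The task reduces to producing a period of $y$ from this data, which will contradict aperiodicity.

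As in the earlier proposition characterising strong unambiguity via the square automaton, the pair $(\pi,\pi')$ yields a bi-infinite path in $\A(\varphi)\times\A(\varphi)$ labeled $y$ that visits the non-diagonal part infinitely often in both directions. Since $\A(\varphi)$ is finite, some non-diagonal vertex $(p,q)$ is visited infinitely often on both sides, producing arbitrarily long cycles $(p,q)\xrightarrow{w_i}(p,q)$ in the non-diagonal part. A second finiteness argument on cycle labels lets me extract a single label $w$ that appears as the label of such a cycle at a set of positions in $y$ of positive density.

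The core step is to turn this recurring pattern into a genuine period of $y$. I would use the self-similar nature of $X(\varphi)$: every factor of any point of $X(\varphi)$ lies in some $\varphi^n(a)$, so each of the two factorizations $(x,k),(x',k')$ refines uniquely to a $\varphi^n$-factorization of $y$, with nested cut sets $C_n \subset C$ and $C'_n \subset C'$. The recurrence of $w$ gives, for each $n$, a uniform bounded offset $d_n \neq 0$ between $C_n$ and $C'_n$ along a shift-invariant set of positions. Passing to a subsequence on which $d_n$ stabilises to some $d\neq 0$, the identity of labels forces $S^d y = y$ on arbitrarily long windows and therefore globally, contradicting aperiodicity.

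The principal obstacle is the absence of primitivity: the cut gaps in the $\varphi^n$-factorisation of $y$ may fail to grow with $n$ if $x$ or $x'$ contains letters $a\in A$ whose iterates $\varphi^n(a)$ remain bounded, and then the offset $d_n$ may not stabilise at all. To neutralise this, I would decompose $A$ into the growing sub-alphabet $A_g = \{a : |\varphi^n(a)|\to\infty\}$ and its complement $A_b$. Aperiodicity of $y$ forces letters of $A_g$ to appear in $x$ and $x'$ with positive density in both directions, since otherwise $y$ would decompose, outside a bounded region, into concatenations of words from the finite set $\bigcup_n \varphi^n(A_b)$ and would be eventually periodic. Once the analysis is localised to the $\varphi^n(A_g)$-segments of $y$, the growth of $|\varphi^n(a)|$ on $A_g$ drives the gaps in $C_n$ to infinity, the stabilisation of $d_n$ becomes effective, and the contradiction proceeds as above.
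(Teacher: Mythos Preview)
The paper does not prove this theorem at all: it is stated as a result quoted from \cite{BertheSteinerThuswaldnerYassawi2019}, with no proof given. So there is no ``paper's own proof'' to compare your attempt against.

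On the merits of your sketch, there are genuine gaps. The most serious one is the refinement step: you assert that each of the two level-one factorizations $(x,k)$ and $(x',k')$ ``refines uniquely to a $\varphi^n$-factorization of $y$, with nested cut sets $C_n\subset C$ and $C'_n\subset C'$''. But producing a $\varphi$-preimage of $x\in X(\varphi)$ lying again in $X(\varphi)$ is already nontrivial for non-primitive $\varphi$ (points of $X(\varphi)$ need not be in the image of $\hat\varphi$), and asserting \emph{uniqueness} of that preimage is precisely the recognizability you are trying to prove. Without this, there is no canonical tower of cut sets $C_n$, and the quantity $d_n$ is not well defined. Second, even granting the towers, you give no mechanism forcing the offsets $d_n$ to remain bounded, let alone to stabilise along a subsequence; in the non-primitive setting the $\varphi^n$-block lengths can be wildly inhomogeneous, so there is no a priori bound linking $d_n$ at different levels. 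Third, the claim that aperiodicity of $y$ forces growing letters to occur in $x$ with \emph{positive density} is false in general: growing letters can occur with density zero (yet infinitely often), and then your localisation to $\varphi^n(A_g)$-segments does not control the bounded-letter stretches.

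The actual argument in \cite{BertheSteinerThuswaldnerYassawi2019} is substantially more delicate than this outline and does not proceed via the square-automaton/offset-stabilisation route you describe; it goes through a structural reduction on the morphism and a careful analysis of the bounded and growing letters that avoids the circularity above.
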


  The notion of a recognizable morphism on $A^\Z$ for aperiodic points
  can be formulated as a problem of combinatorics on words. Indeed,
  $\varphi:B^*\to A^*$ is recognizable on $A^\Z$
  for aperiodic points if the restriction
  of $\varphi$ to $B$ is injective and every aperiodic sequence $x\in A^\Z$
  has a unique factorization in words of $\varphi(B)$.
  
  A morphism $\varphi:B^*\to A^*$ is \emph{indecomposable}
  if for every $\alpha:C^*\to A^*$ and $\beta:B^*\to C^*$
  such that $\varphi=\alpha\circ\beta$, one has
  $\Card(C)\ge\Card(B)$. In particular, if $\varphi$
  is recognizable, one has $\Card(A)\ge\Card(B)$.
  \begin{example}
    A morphism $\varphi:A^*\to A^*$ with $A=\{a,b\}$
    is indecomposable unless both words
    $\varphi(a),\varphi(b)$ are powers of the same word.
    In particular, the Thue-Morse morphism is indecomposable.
  \end{example}
  The following result 
  appears (using a different terminology) in~\cite{KarhumakiManuchPlandowski2003}.
  \begin{theorem}
    An indecomposable
    morphism $\varphi:B^*\to A^*$
    is recognizable on $A^\Z$ for aperiodic points.
    \end{theorem}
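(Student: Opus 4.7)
The plan is to prove the contrapositive: assuming $\varphi$ is not unambiguous on $A^\Z$ for aperiodic points, I will produce a factorization $\varphi=\alpha\circ\beta$ with $\Card(C)<\Card(B)$, contradicting indecomposability.

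First I would dispose of the easy case where $\varphi|_B$ is not injective. If $\varphi(b)=\varphi(b')$ for some $b\ne b'$ in $B$, take $C=B\setminus\{b'\}$, let $\beta:B^*\to C^*$ send $b'$ to $b$ and fix every other letter, and let $\alpha=\varphi|_{C^*}$. Then $\varphi=\alpha\circ\beta$ with $\Card(C)<\Card(B)$. From now on I may therefore assume $\varphi|_B$ is injective and set $X=\varphi(B)$, so that $\Card(X)=\Card(B)$.

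The substantive content, which is the heart of the Karhum\"aki--Ma\'nuch--Plandowski argument \cite{KarhumakiManuchPlandowski2003}, is the following two-sided aperiodic defect lemma: whenever a finite set $X\subseteq A^+$ admits an aperiodic bi-infinite word with two distinct $X$-factorizations, one has $X\subseteq Y^*$ for some $Y\subseteq A^+$ with $\Card(Y)\le\Card(X)-1$. Once this lemma is granted, the desired decomposition follows immediately: take an alphabet $C$ of cardinality $\Card(Y)$ in bijection with $Y$, let $\alpha:C^*\to A^*$ be the morphism extending this bijection, and for each $b\in B$ pick any factorization of $\varphi(b)$ in $Y^*$ to define $\beta(b)\in C^*$. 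Then $\varphi=\alpha\circ\beta$ and $\Card(C)<\Card(B)$, the sought contradiction.

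The main obstacle is the defect lemma itself. Since the classical defect theorem for finite words supplies $X\subseteq Y^*$ with $\Card(Y)<\Card(X)$ as soon as $X$ fails to be a code, the task reduces to extracting from the aperiodic point $y$ a finite factor with two distinct $X$-factorizations. I would look at the two sets of cut positions $P,Q\subseteq\Z$ of the two factorizations of $y$ and argue that $P\cap Q$ is infinite in both directions. The pair of offsets of a position $n$ within its current block in each factorization takes only finitely many values along $y$; a pigeonhole-plus-periodicity argument shows that if $P\cap Q$ were not bi-infinite, the offset trajectory would cycle in a way that forces $y$ to be eventually periodic, contradicting aperiodicity. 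Once $P\cap Q$ is known to be bi-infinite, between two of its consecutive elements straddling a position where the two factorizations disagree one reads a finite word with two distinct $X$-factorizations, so $X$ is not a code and the finite defect theorem yields $Y$. Making this pigeonhole-plus-periodicity step rigorous is the delicate point of the whole argument.
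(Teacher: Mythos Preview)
The paper does not give its own proof of this theorem; it simply attributes the result to \cite{KarhumakiManuchPlandowski2003}. So there is no in-paper argument to compare against, and your proposal has to be judged on its own.

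Your global strategy---take the contrapositive and reduce to a bi-infinite defect lemma---is sound, and is indeed what Karhum\"aki, Ma\v{n}uch and Plandowski establish. The gap is in your sketch of that lemma. You attempt to reduce it to the classical finite-word defect theorem by showing that $X=\varphi(B)$ is not a code, via the claim that aperiodicity of $y$ forces $P\cap Q$ to be bi-infinite. That implication is false. Take $X=\{a,ab,bb\}$: a short induction shows $X$ is a code. The point $y={}^\omega b\cdot ab^\omega$ (so $y_0=a$ and $y_n=b$ for $n\ne 0$) is aperiodic, and it has the two $X$-factorizations with cut sets
\[
P=\{\ldots,-4,-2,0,1,3,5,\ldots\},\qquad Q=\{\ldots,-4,-2,0,2,4,\ldots\},
\]
so $P\cap Q=\{0,-2,-4,\ldots\}$ is bounded above. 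Your pigeonhole step would at best show that $y$ is \emph{eventually} periodic to the right---which it is---but eventually periodic does not contradict aperiodic (the paper's ``aperiodic'' means merely ``not periodic''). Since $X$ here is a code, no finite word has two $X$-factorizations, and the route through the classical defect theorem is blocked.

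Note that in this example $X$ nevertheless has the required defect ($X\subset\{a,b\}^*$, so rank $2<3=\Card(X)$), consistently with the theorem. Extracting such a defect directly from the bi-infinite double factorization---without first proving $X$ fails to be a code---is precisely the substantive contribution of \cite{KarhumakiManuchPlandowski2003}, and it needs more than a pigeonhole on offsets.
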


  \begin{example}
   The  Thue-Morse morphism
    is recognizable on $A^\Z$ for aperiodic points. Actually, the
    only periodic points with  more than one factorization are
    $(ab)^\infty$ and $(ba)^\infty$.
  \end{example}

  \begin{example}
    The morphism $a\mapsto ab,b\mapsto aa$ is the \emph{period-doubling
    morphism}. It is recognizable on $A^\Z$ for aperiodic points.
    The only point with more than one factorization is $a^\Z$.
    \end{example}

\section{Coded shifts}
The following definition appears in \cite{BlanchardHansel1986}.
A \emph{coded shift} is a shift space $X$ such that $\cL(X)$
is the set of factors of $C^*$
for some language $C$. We say that $X$ is
\emph{defined} or \emph{coded} by $C$.

Actually, a coded shift is defined in \cite{BlanchardHansel1986}
for a language $C$ which is a \emph{prefix code}, that is
such that $C$ does not contain any proper prefix of one its elements.
It is proved in~\cite[Proposition 2.5]{BlanchardHansel1986} that
this is not a restriction, in the sense that any coded
shift $X$ is defined by a prefix code $C$.

As an equivalent definition, a shift space is a coded shift
if there is a countable  strongly connected
graph $G$ with edges labeled by $A$
such that $X$ is the closure of the set of labels of bi-infinite paths in $G$
(see \cite{BlanchardHansel1986} proposition 2.1).
Such a graph is thus an automaton $\A$ with all states initial and final
and we also say that $\A$ recognizes $X$.

A countable automaton on a finite alphabet is sometimes
called a countable state \emph{Markov shift} (see~\cite{FiebigFiebig2002}).

When $C$ is a prefix code, the automaton can be taken to be
the minimal automaton $\A(C^*)$ of the set $C^*$. For this
automaton, the set of terminal states is reduced to $\{i\}$.

\begin{example}
  The even shift is a coded shift defined by the finite prefix code
  $C=\{b,aa\}$.
  \end{example}

A coded shift is irreducible. Indeed, if $u,v\in \cL(X)$, we have
$puq,rvs\in X^*$ for some words $p,q,r,s$. Then $puqrvs\in C^*$
and thus $uwv\in \cL(X)$ with $w=qr$.

A coded shift contains a dense set of periodic points. Indeed,
if $X$ is coded by $C$, then $u^\infty$ belongs to $X$
for every $u\in C^*$. As a consequence a coded shift
cannot be minimal unless it is periodic (that is equal to
the shifts of a periodic point).

Let $X$ be the coded shift defined by $C$. For every sequence $(c_n)_{n\in\Z}$
of elements of $C$, the sequence $\cdots c_{-1}\cdot c_0c_1\cdots$
belongs to $X$. Note that $X$ will contain in general other points.

The following result is well-known.
\begin{proposition}\label{propositionFactorCoded}
  A factor of a coded shift is a coded shift.
\end{proposition}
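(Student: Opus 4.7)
The plan is to use the equivalent characterization of coded shifts recalled just above (from \cite{BlanchardHansel1986}, Proposition~2.1): $X$ is coded if and only if there is a countable strongly connected graph $G$ with edges labeled by the alphabet such that $X$ is the closure of the set of labels of bi-infinite paths in $G$. So given $X$ coded and a factor map $\varphi:X\to Y$ given by a block map $f:\cL_N(X)\to B$ of width $N=m+n+1$, I would aim to build a countable strongly connected labeled graph $G'$ whose bi-infinite path labels have closure $Y$.

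First I would fix a countable strongly connected labeled graph $G=(Q,E,\lambda)$ coding $X$, and construct the ``higher-block relabeling'' $G'$ as follows: the vertices of $G'$ are the length-$(N-1)$ paths in $G$, and for each length-$N$ path $(e_1,\ldots,e_N)$ in $G$ there is an edge in $G'$ from $(e_1,\ldots,e_{N-1})$ to $(e_2,\ldots,e_N)$ carrying the label $f(\lambda(e_1)\cdots\lambda(e_N))\in B$. The verifications split into two easy points. Countability of $G'$ is immediate from countability of $E$. Strong connectivity of $G'$ is inherited from $G$: given two length-$(N-1)$ paths $P,P'$ in $G$, any path in $G$ from the terminal vertex of $P$ to the initial vertex of $P'$ produces a path in $G'$ from $P$ to $P'$.

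Next I would show that the labels of bi-infinite paths in $G'$ are exactly the images under $\varphi$ of the labels of bi-infinite paths in $G$ (up to a shift of indices absorbing $m$ and $n$). This rests on the natural bijection between bi-infinite paths of $G$ and of $G'$, obtained by flattening a bi-infinite sequence of overlapping length-$(N-1)$ windows into a single bi-infinite sequence of edges in $G$; by construction of the labels of $G'$ via $f$, this bijection carries $G$-labels to $G'$-labels precisely by $\varphi$.

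To conclude, I would invoke compactness of $X$ and continuity of $\varphi$: since $\varphi(X)$ is compact hence closed, $\varphi$ commutes with taking closure, so the closure of the labels of bi-infinite paths in $G'$ equals $\varphi\bigl(\overline{\text{labels of bi-infinite paths in }G}\bigr)=\varphi(X)=Y$. This exhibits $Y$ as recognized by the countable strongly connected graph $G'$, so $Y$ is coded. The only mild technical point is tracking the index shift induced by $m,n$ in the path bijection and justifying the closure-image interchange; both are routine, so the proof stays short.
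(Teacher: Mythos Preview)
Your proof is correct and follows essentially the same approach as the paper: both build a higher-block presentation of the original automaton and relabel its edges via $f$. The only cosmetic difference is that the paper takes states of the new automaton to be pairs $(p,u)\in Q\times\cL_n(X)$ (current state together with the most recently read word), whereas you take states to be length-$(N-1)$ paths in $G$; since a path records at least as much information as the pair (terminal state, label), the two constructions are interchangeable, and your explicit appeal to compactness of $X$ and continuity of $\varphi$ to pass to the closure is exactly the point the paper leaves implicit.
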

\begin{proof}
  Let $X$ be a coded shift on the alphabet $A$
  and let $\A$ be a countable strongly connected automaton
  with set of states $Q$ recognizing $X$.
  Let $\varphi:X\to Y$ be a factor map from $X$ onto $Y$ defined by
  a block map $f:\cL_{n+1}(X)\to B$. Let $\B$ be the automaton
  on the set of states $Q\times\cL_n(X)$ with edges
  $(p,bu)\edge{c}(q,ua)$ for all edges $p\edge{a}q$ of $\A$
  and $bua\in\cL_{n+1}(X)$ with $a,b\in A$ and $c=f(bua)$.
  Then $\B$ is a countable strongly connected automaton which
  recognizes $Y$.
\end{proof}
The same proof shows the well-known
fact that a factor of a sofic shift is sofic.
\begin{example}
  Let us apply the proof of Proposition~\ref{propositionFactorCoded} 
  to the factor map of Example~\ref{exampleFactorGolden}.
  We start from  the automaton of
  Figure \ref{figureEven2} on the left recognizing
  the golden mean shift.   We obtain
  as a result the automaton of Figure \ref{figureEven2}
  on the right which is identical to the automaton of
  the even shift of Figure~\ref{figureEven} on the right.
  \begin{figure}[hbt]
    \centering
    \tikzset{node/.style={circle,draw,minimum size=0.4cm,inner sep=0.4pt}}
    \tikzset{title/.style={minimum size=0.5cm,inner sep=0.4pt}}
    \tikzstyle{every loop}=[->,shorten >=1pt,looseness=12]
    \tikzstyle{loop left}=[in=130,out=220,loop]
    \tikzstyle{loop right}=[in=-45,out=45,loop]
    \begin{tikzpicture}
     \node[node](0)at(0,0){$0$};\node[node](1)at(2,0){$1$};

          \draw[left,->,loop left,>=stealth](0)edge node{$a$}(0);
          \draw[above,->,bend left](0)edge node{$b$}(1);
          \draw[above,->,bend left](1)edge node{$a$}(0);

          \tikzstyle{every loop}=[->,shorten >=1pt,looseness=6]
          \tikzstyle{loop left}=[in=140,out=210,loop]
          \node[node](0)at(5,0){$(0,a)$};\node[node](1)at(7,0){$(1,b)$};

          \draw[left,->,loop left,>=stealth](0)edge node{$a$}(0);
          \draw[above,->,bend left](0)edge node{$b$}(1);
          \draw[above,->,bend left](1)edge node{$b$}(0);
        \end{tikzpicture}
        \caption{The golden mean shift and the even shift.}\label{figureEven2}
   \end{figure}
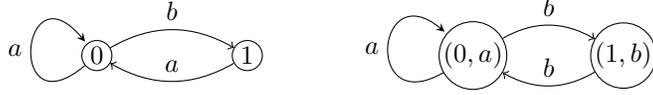
  \end{example}

\begin{corollary}
  The class of coded shifts is closed under conjugacy.
\end{corollary}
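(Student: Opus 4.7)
The plan is to derive this directly from Proposition~\ref{propositionFactorCoded}, which states that a factor of a coded shift is a coded shift. By definition, a conjugacy $\varphi:X\to Y$ is a factor map (it is a sliding block code from $X$ onto $Y$), so $Y$ is in particular a factor of $X$. Conjugacy is a symmetric relation, but for the corollary we need only one direction: if $X$ is coded and $Y$ is conjugate to $X$, then $Y$ is a factor of $X$, hence coded.

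Concretely, I would write: let $X$ be a coded shift on an alphabet $A$ and let $\varphi:X\to Y$ be a conjugacy onto a shift space $Y$ on an alphabet $B$. Since $\varphi$ is in particular a factor map from $X$ onto $Y$, Proposition~\ref{propositionFactorCoded} applies and yields that $Y$ is a coded shift. This is all that is needed, so the proof is a one-line deduction.

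There is essentially no obstacle here, since the substantive content (building a countable strongly connected automaton for the image of a block map) has already been handled in the proof of Proposition~\ref{propositionFactorCoded}. The only thing worth a sentence of comment is that conjugacies are factor maps by definition (a one-to-one sliding block code is still a sliding block code), so no additional structure of the inverse map $\varphi^{-1}$ is required for this direction of the argument.
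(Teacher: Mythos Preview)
Your proposal is correct and matches the paper's approach: the corollary is stated immediately after Proposition~\ref{propositionFactorCoded} with no separate proof, so the intended argument is precisely the one-line deduction you give (a conjugacy is in particular a factor map, hence the image is coded).
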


An interesting class of coded shifts is formed by the \emph{$\beta$-shifts}.
Let us recall briefly the definition (see~\cite{FrougnySakarovitch2010}
or also \cite{Blanchard1989} for a detailed presentation). Let $\beta$ be a real number with $\beta>1$.
We define for every real number $t\in[0,1]$ its $\beta$-expansion
as the sequence $d_\beta(t)=(x_n)_{n\ge 1}$ of integers $<\beta$ such that
\begin{displaymath}
  t=\sum_{n\ge 1}x_n\beta^{-n}
\end{displaymath}
which is lexicographically maximal. Thus $x_n$ is obtained
by setting $x_1=\lfloor \beta t\rfloor$ (with $\lfloor x\rfloor$
denoting the integer part of $x$) with $r_1=\beta t-x_1$
and recursively $x_n=\lfloor \beta r_{n-1}\rfloor$ with
$r_n=\beta r_{n-1}-x_n$.

For example, if $\beta=(1+\sqrt{5})/2$ is the golden mean, the
$\beta$-expansion of $1$ is $d_\beta(1)=110000\cdots$.

The \emph{$\beta$-shift} $X_\beta$ defined by $\beta$ is the closure of the
set of expansions of all numbers in $[0,1)$.
The $\beta$-shift is a coded shift recognized by the automaton
of Figure~\ref{figureBetaShift} where the sequence
$g_n$, called the generating sequence of $\beta$,
is either the expansion of $1$ if it is not evenutally $0$, or
the sequence $(x_1\cdots x_{k-1}(x_k-1))^\omega$ if $d_\beta(1)=x_1\cdots x_k00\cdots$
with $x_k\ne 0$.
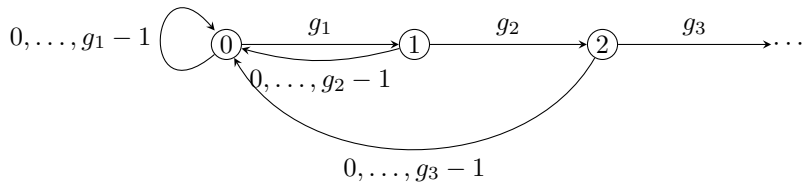
\begin{figure}[hbt]
    \centering
    \tikzset{node/.style={circle,draw,minimum size=0.4cm,inner sep=0.4pt}}
    \tikzset{title/.style={minimum size=0.5cm,inner sep=0.4pt}}
    \tikzstyle{every loop}=[->,shorten >=1pt,looseness=12]
    \tikzstyle{loop left}=[in=130,out=220,loop]
    \tikzstyle{loop right}=[in=-45,out=45,loop]
    \begin{tikzpicture}
      \node[node](0)at(0,0){$0$};
      \node[node](1)at(2.5,0){$1$};
      \node[node](2)at(5,0){$2$};
      \node[title](3)at(7.5,0){$\cdots$};

      \draw[->,left,loop left,>=stealth](0)edge node{$0,\ldots,g_1-1$}(0);
      \draw[->,above,>=stealth](0)edge node{$g_1$}(1);
      \draw[->,above,>=stealth](1)edge node{$g_2$}(2);
      \draw[->,above,>=stealth](2)edge node{$g_3$}(3);
      \draw[->,bend left=15,below,>=stealth](1)edge node{$0,\ldots,g_2-1$}(0);
      \draw[->,bend left=60,below,>=stealth](2)edge node{$0,\ldots,g_3-1$}(0);
      \end{tikzpicture}
\caption{An automaton recognizing the $\beta$-shift.}\label{figureBetaShift}
\end{figure}
Note that for the case where $\beta$ is the golden mean, we obtain
the automaton of Figure~\ref{figureBetaShift2} which recognizes
the golden mean shift.
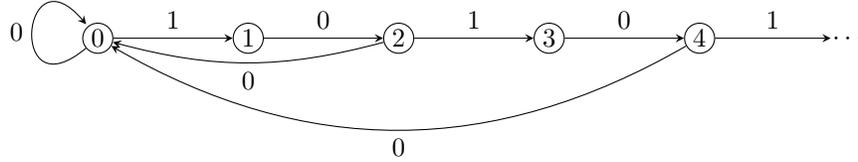
\begin{figure}[hbt]
    \centering
    \tikzset{node/.style={circle,draw,minimum size=0.4cm,inner sep=0.4pt}}
    \tikzset{title/.style={minimum size=0.5cm,inner sep=0.4pt}}
    \tikzstyle{every loop}=[->,shorten >=1pt,looseness=12]
    \tikzstyle{loop left}=[in=130,out=220,loop]
    \tikzstyle{loop right}=[in=-45,out=45,loop]
    \begin{tikzpicture}
      \node[node](0)at(0,0){$0$};
      \node[node](1)at(2,0){$1$};
      \node[node](2)at(4,0){$2$};
      \node[node](3)at(6,0){$3$};
      \node[node](4)at(8,0){$4$};
      \node[title](5)at(10,0){$\cdots$};

      \draw[->,left,loop left,>=stealth](0)edge node{$0$}(0);
      \draw[->,above,>=stealth](0)edge node{$1$}(1);
      \draw[->,above,>=stealth](1)edge node{$0$}(2);
      \draw[->,above,>=stealth](2)edge node{$1$}(3);
      \draw[->,above,>=stealth](3)edge node{$0$}(4);
      \draw[->,above,>=stealth](4)edge node{$1$}(5);
      \draw[->,bend left=15,below,>=stealth](2)edge node{$0$}(0);
      \draw[->,bend left=30,below,>=stealth](4)edge node{$0$}(0);
      \end{tikzpicture}
\caption{An automaton recognizing the golden mean shift.}\label{figureBetaShift2}
\end{figure}

A automaton is called \emph{reversible} if it is both deterministic
and co-deterministic. Note that the set labels of simple
paths around a state in a reversible  automaton
 is not only
a prefix code, but actually a bifix code, that is such that
its reversal is also a prefix code.

The following result is from \cite{FiebigFiebig1992} (Theorem 1.7).
\begin{theorem} \label{theoremFiebig1.7}
For every coded shift $X$, the set $\cL(X)$ is recognized by a countable
strongly connected automaton which is reversible.
\end{theorem}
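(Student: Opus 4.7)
\noindent\textit{Proof plan.} Since $X$ is coded, by \cite[Proposition~2.5]{BlanchardHansel1986} I may assume it is defined by a prefix code $C$, so that $\cL(X)$ is the set of factors of words of $C^*$. My strategy is to construct the desired automaton by starting from the minimal automaton of $C^*$ and refining its states enough to force co-determinism.

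The natural starting point is $\A(C^*)$, whose states are indexed by the proper prefixes of words of $C$. It is countable, strongly connected (the unique initial/terminal state $i$ is reached from any state by completing the current codeword, and every state is reached from $i$ by reading a proper prefix), and, when every state is declared both initial and terminal, it recognizes $\cL(X)$: any factor of a word of $C^*$ labels a subpath of the $i$-to-$i$ loop reading that word. Countability, determinism and strong connectivity are thus in hand; only co-determinism may fail, because completion of any codeword returns the computation to $i$, and different codewords ending with the same letter then share a predecessor.

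The remedy I would implement is to tag each state of $\A(C^*)$ with an equivalence class of its past. For each state $q$, let $\sim_q$ identify two words $u,v$ with $i\cdot u = i\cdot v = q$ whenever they share the same left-follower set $\{w\in A^* : wu\in\cL(X)\}$; new states are pairs $(q,[u])$ with transitions $(q,[u])\edge{a}(q\cdot a,[ua])$. Determinism is inherited from $\A(C^*)$ once one checks right-compatibility of $\sim$; co-determinism then holds because two distinct predecessors of a common state on the same letter would, by construction, carry distinct left-follower data and hence be separated by the refinement. Strong connectivity of the refined automaton is inherited from the irreducibility of $X$ (established in the excerpt for all coded shifts): words $u_1, u_2$ realizing any two refined states admit a joining $u_1 w u_2\in\cL(X)$ that lifts to a path.

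The main obstacle is countability of the refined state space: one must show that for each state $q$ of $\A(C^*)$ there are only countably many left-follower classes among the words reaching $q$. This is where the prefix-code structure of $C$ does real work: left-follower sets of such words are parametrized by a countable amount of ``$C$-factorization data'' to the left (essentially the sequence of recently completed codewords, modulo how far back the future can discriminate), and hence take only countably many distinct values. Establishing this combinatorial countability claim, together with the careful right-compatibility verification needed to make the proposed transitions well-defined, is the technical heart I would expect to dominate the proof.
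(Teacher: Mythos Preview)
Your proposed refinement does not produce a co-deterministic automaton, and the failure is structural rather than a missing verification. Take the golden mean shift coded by the prefix code $C=\{a,ba\}$; the automaton $\A(C^*)$ has states $i$ and $b$ with edges $i\edge{a}i$, $i\edge{b}b$, $b\edge{a}i$. The left-follower sets in $\cL(X)$ (words avoiding $bb$) of the words $b$, $ba$, and $baa$ all coincide: each equals $\{w\in\cL(X):w\text{ does not end in }b\}$. Call this class $c$. Then your refined automaton contains the distinct states $(i,c)$ and $(b,c)$, and reading $a$ sends $(i,c)\edge{a}(i\cdot a,[baa])=(i,c)$ as well as $(b,c)\edge{a}(b\cdot a,[ba])=(i,c)$. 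Two different $a$-edges enter $(i,c)$, so the automaton is not co-deterministic. Your claim that ``distinct predecessors\ldots would carry distinct left-follower data'' is exactly what fails: here they carry the \emph{same} left-follower class and are distinguished only by the $\A(C^*)$-coordinate, which the refinement cannot collapse. More generally, the product of a deterministic cover and a co-deterministic cover is neither deterministic nor co-deterministic; your construction is essentially this product.

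Two further remarks. The ``right-compatibility'' you flag as a check is in fact the analogous obstruction on the deterministic side and is likewise not automatic. On the other hand, the countability issue you single out as the technical heart is a non-issue: the classes $[u]$ are indexed by words $u\in A^*$, a countable set, so there are at most countably many. The paper's proof proceeds along an entirely different line: rather than refining any given automaton, it manufactures a single bi-infinite sequence $x\in X$ with dense orbit, takes the line graph on $\Z$ with edges $i\edge{x_i}i{+}1$ (trivially reversible), and then adds finitely many bridge paths to achieve strong connectivity. The bridges are labelled $bu_2d$ where the letters $b,d$ were chosen in advance to differ from letters $a,c$ already present at the attaching vertices, so that determinism and co-determinism survive the surgery.
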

\begin{proof}
Since $X$ is a coded shift, the set $\cL(X)$ is recognized by a
countable strongly connected automaton. Let $p$ be a state of this
automaton. Let us assume that all infinite paths starting at $p$ have
the same label. Then since the automaton is strongly connected, the
shift $X$ is coded by a code containing a single word and the result
is trivial. 
Thus we may assume that there are finite words $ya$ and $yb$, where $a,b$ are
letters with $a \neq b$, labelling paths starting at $p$. Without
loss of generality, we may assume that the paths labeled $ya,yb$
share the same initial part labeled $y$.
Similarly
there are finite words $ct$, $dt$, where $c,d$ are letters with $c \neq
d$, labelling paths ending in $p$. We may also
assume that the paths labeled $ct,dt$ share the same final
part. Since the automaton is strongly
connected there are words $u_1, u_2$ such that $yau_1ct$,
$ybu_2dt$ are labels of cycling paths around $p$. 
(see Figure~\ref{figureu_1u_2}).
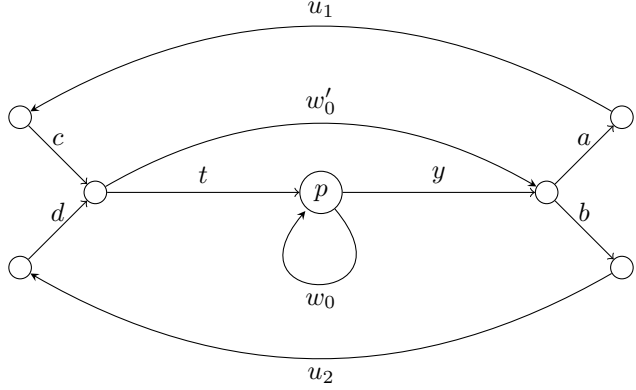
\begin{figure}[hbt]
\centering
\tikzset{node/.style={circle,draw,minimum size=0.3cm,inner sep=0.1cm}}
\tikzset{title/.style={minimum size=0cm,inner sep=0pt}}
\tikzstyle{every loop}=[->,shorten >=1pt,looseness=12]
\tikzstyle{loop above}=[in=50,out=130,loop]
\tikzstyle{loop below}=[in=230,out=310,loop]
\begin{tikzpicture}
  \node[node](c)at(0,1){};\node[node](d)at(0,-1){};
  \node[node](v)at(1,0){};
  \node[node](p)at(4,0){$p$};
  \node[node](u)at(7,0){};
  \node[node](a)at(8,1){};\node[node](b)at(8,-1){};

  \draw[->,above](c)edge node{$c$}(v);\draw[->,above](d)edge node{$d$}(v);
  \draw[->,above](v)edge node{$t$}(p);
  \draw[->,above](p)edge node{$y$}(u);
  \draw[->,above](u)edge node{$a$}(a);\draw[->,above](u)edge node{$b$}(b);
  \draw[->,above,bend right,> = stealth](a)edge node{$u_1$}(c);
  \draw[->,above,bend left,> = stealth](v)edge node{$w'_0$}(u);
  \draw[->,below,bend left,> = stealth](b)edge node{$u_2$}(d);
  \draw[->,below,>=stealth](p)edge[loop below]node{$w_0$}(p);
\end{tikzpicture}
\caption{The words $u_1,u_2$.}\label{figureu_1u_2}
\end{figure}

Let $(w_i)_{i \in \mathbb{Z}}$ be an enumeration of the labels of all
finite paths from $p$ to $p$. Let $w'_i = tw_iy$ and $x=(x_i)_{i \in \mathbb{Z}}$ denote the
bi-infinite word
\begin{displaymath}
  \cdots au_1cw'_{-n} \cdots au_1cw'_{-1}au_1c \cdot w'_0au_1cw'_1
  \cdots au_1cw'_n \cdots.
\end{displaymath}
By construction the orbit of $x$ in $X$ is dense.
We define a new enumerable strongly connected automaton as follows. 
We start with the set of states $\mathbb{Z}$ and edges $(i, x_i,
i+1)$ (see Figure~\ref{figureNewAutomaton}).
\begin{figure}[hbt]
\centering
\tikzset{node/.style={circle,draw,minimum size=0.3cm,inner sep=0.1cm}}
\tikzset{title/.style={minimum size=0cm,inner sep=0pt}}
\tikzstyle{every loop}=[->,shorten >=1pt,looseness=12]
\tikzstyle{loop above}=[in=50,out=130,loop]
\tikzstyle{loop below}=[in=230,out=310,loop]
\begin{tikzpicture}
  \node[title](g)at(-3,0){$\cdots$};
  \node[node,inner sep=0.5pt](-1)at(-2,0){$-1$};\node[node](0)at(0,0){$0$};
  \node[node](1)at(2,0){$1$};\node[node](2)at(4,0){$2$};

  \draw[->,above](-1)edge node{$x_{-1}$}(0);
  \draw[->,above](0)edge node{$x_0$}(1);
  \draw[->,above](1)edge node{$x_1$}(2);
  \node[title](d)at(5,0){$\cdots$};
  \end{tikzpicture}
\caption{The new automaton.}\label{figureNewAutomaton}
  \end{figure}
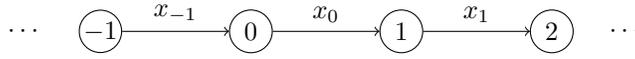
Let $p_n$ be the state obtained after reading $w'_n$ and $q_n$
the state before reading $w'_{-n}$. We add to the automaton a path
labelled by $bu_2d$ from $p_n$ to $q_n$ (see Figure~\ref{figureNewAutomaton2}).
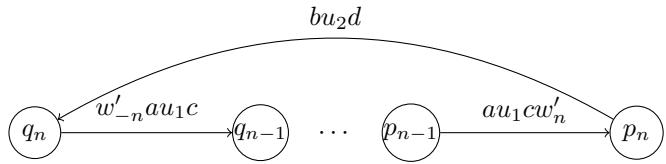
\begin{figure}[hbt]
\centering
\tikzset{node/.style={circle,draw,minimum size=0.3cm,inner sep=0.1cm}}
\tikzset{title/.style={minimum size=0cm,inner sep=0pt}}
\tikzstyle{every loop}=[->,shorten >=1pt,looseness=12]
\tikzstyle{loop above}=[in=50,out=130,loop]
\tikzstyle{loop below}=[in=230,out=310,loop]
\begin{tikzpicture}
  \node[node](qn)at(-4,0){$q_n$};\node[node,inner sep=0pt](qn-1)at(-1,0){$q_{n-1}$};
  \node[title](m)at(0,0){$\cdots$};
  \node[node,inner sep=0](pn-1)at(1,0){$p_{n-1}$};\node[node](pn)at(4,0){$p_n$};

  \draw[->,above](qn)edge node{$w'_{-n}au_1c$}(qn-1);
  \draw[->,above](pn-1)edge node{$au_1cw'_{n}$}(pn);
  \draw[->,bend right,above](pn)edge node{$bu_2d$}(qn);
   \end{tikzpicture}
\caption{Adding paths to the new automaton.}\label{figureNewAutomaton2}
  \end{figure}
By construction the automaton
is strongly connected, deterministic and co-deterministic. The set of
labels of its finite paths is $\cL(X)$.
\end{proof}
The above proof builds an automaton which is always infinite.
An important observation is that for some sofic shifts, such
automaton has to be infinite. Indeed, the languages $L$ recognized
by reversible finite automata belong to a particular
class of recognizable languages characterized by the
condition that for every $u,v,w\in A^*$, one has
\begin{displaymath}
  uvv^*w\subset L\Rightarrow uw\in L
\end{displaymath}
(see~\cite{Pin1992}). For example, the golden mean shift
$X$ cannot be recognized by a finite reversible automaton
since $\cL(X)$ obviously does not satisfy this condition.
\section{Unambiguously coded shifts}
A set $C\subset A^*$ is called a \emph{strong code}
if for every $x\in A^\Z$ there exists at most one pair
of a sequence $(c_n)_{n\in\Z}$ and an integer $k$ with $0\le k<|c_0|$
such that
\begin{displaymath}
  x=S^k(\cdots c_{-1}\cdot c_0c_1\cdots).
\end{displaymath}
As an equivalent formulation,  $C$ is a strong code
if for every $x\in A^\Z$ there is at most pair of a sequence
$(c_n)_{n\in\Z}$ and a factorization $c_0=ps$ with $s$ nonempty
such that
\begin{displaymath}
  x=\cdots c_{-2}c_{-1}p\cdot sc_1c_2\cdots
\end{displaymath}
In particular, the set $C$ has to be a circular code.
The following example, due to \cite{DevolderTimmermann1992}
shows that an infinite circular code need not be a strong code
(in the same paper, it is shown that the result is true
however if the set is recognizable).
\begin{example}
  The set $C=\{ab\}\cup\{ab^nab^{n+1}\mid n\ge 1\}$ is a circular code.
  It is not a strong code since $^\omega(ab)\cdot ab^2ab^3ab^4\cdots$ has two
  factorizations.
\end{example}

A shift space $X$ is said to be an \emph{unambiguously coded shift}
if it  is coded by some strong code $C$. In this case, we also say
that $X$ is \emph{unambiguously coded} by $C$.

This notion  appears in \cite{Pavlov2018} (where $C$ is called uniquely
decipherable when $X$ is unambiguously coded by $C$)
and also in \cite{BurrDasWolfYang2020} (were
$X$ is called uniquely representable if it is unambiguously
coded by some $C$).

\begin{example}\label{exampleEvenUnambiguous}
  The even shift is unambiguously coded. This is not true for
  $C=\{a,bb\}$ since the sequence $x=b^\infty$ has two factorizations.
  But it becomes true if we choose the prefix code $C'=(b^2)^*a$.
\end{example}

The following result, which
is stronger than Theorem~\ref{theoremFiebig1.7},
is from \cite{FiebigFiebig1992} (Remark 1.8).
The proof is not given there but was kindly provided to us
by Ulf-Rainer Fiebig, from the notes of Doris Fiebig.
Theorem~\ref{theoremFiebig1.7} is a consequence of
Theorem~\ref{theoremFiebigRemark1.8} but we have
stated and proved it before, because its proof is much easier.
\begin{theorem} \label{theoremFiebigRemark1.8}
For every coded shift $X$, the set $\cL(X)$ is recognized by a countable
strongly connected automaton which is
strongly unambiguous as well as reversible.
\end{theorem}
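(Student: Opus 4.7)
The plan is to strengthen the construction of Theorem~\ref{theoremFiebig1.7} so that the resulting reversible countable automaton also becomes strongly unambiguous. By the criterion recorded earlier, strong unambiguity fails exactly when the non-diagonal part of $\A\times\A$ contains a bi-infinite path, which for a reversible automaton amounts to the existence of two bi-infinite paths with the same label whose state sequences disagree \emph{at every position}. So the task reduces to ruling out such a pair.

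I would keep the skeleton built in the earlier proof---a bi-infinite spine through $\Z$-indexed states whose label $x$ interleaves the finite loops $(w_i)$ at a base state $p$, separated by copies of the block $au_1c$, together with added bridges carrying the label $bu_2d$ from the spine state $p_n$ reached after $w'_n$ back to the spine state $q_n$ preceding $w'_{-n}$---and introduce two refinements. First, I would arrange the enumeration of $(w_i)$ so that the spine word $x$ is aperiodic; this is always possible once $X$ is not a periodic orbit (a case for which the theorem is immediate). Second, instead of a single fixed $bu_2d$ I would label the $n$th bridge by a word $bu_2^{(n)}d$, where the middle blocks $u_2^{(n)}$ are padded by increasingly long distinguishing segments so that the bridge labels are pairwise distinct and none of them occurs as a factor of the spine~$x$.

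With these refinements, I would argue strong unambiguity by contradiction. Assume two distinct bi-infinite paths $P,P'$ carry the same label; by reversibility their state sequences disagree at every position. Outside the spine the automaton is a disjoint union of finite bridge interiors, so each of $P,P'$ must visit the spine infinitely often on both sides. If both live entirely on the spine, the shared label realises $x$ as a non-trivial shift of itself, contradicting aperiodicity. Otherwise the common label contains at some position a block of the form $bu_2^{(n)}d$ read by a bridge of $P$; since this block is not a factor of $x$, path $P'$ must also traverse a bridge at the same position, and the injectivity of the bridge labelling together with co-determinism then forces the two bridges to coincide, making $P$ and $P'$ share a state---the desired contradiction.

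The main obstacle will be engineering the family $(u_2^{(n)})$ delicately enough to block every \emph{sliding alignment}: misaligned parses in which the label interval read as a single bridge by one path is read by the other as a concatenation of spine pieces with fragments of neighbouring bridges. Ruling out such alignments---for example by padding each $u_2^{(n)}$ with ever-longer prefixes of the spine itself, positioned so that the bridge labels become rigid markers---while still preserving strong connectedness and the recognition of $\cL(X)$, is where the real combinatorial work is concentrated.
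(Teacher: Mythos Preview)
Your plan diverges from the paper's construction and, as you yourself flag in the final paragraph, stops precisely at the step that carries the whole weight: ruling out the sliding alignments. Making the bridge labels $bu_2^{(n)}d$ pairwise distinct and absent from the spine~$x$ is not enough to force the other path $P'$ to traverse a bridge over the \emph{same} interval. While $P$ runs through one bridge, $P'$ may be reading a suffix of the spine, then a full bridge of different index, then a prefix of the spine; the label seen is a concatenation involving at least one bridge label, but there is no reason it should equal $bu_2^{(n)}d$ only by using bridge~$n$ at the exact same offset. Your proposed remedy---padding $u_2^{(n)}$ with long prefixes of~$x$---works against you: it makes bridge labels look \emph{more} like spine segments, not less, so partial overlaps become harder to exclude. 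You also need each $u_2^{(n)}$ to be the label of an actual path in the original automaton between fixed states, which constrains the padding you may choose. As written the proposal is an outline that names the obstacle without a mechanism to overcome it.

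The paper's proof takes a different route and avoids the alignment analysis altogether. It keeps a \emph{single} bridge label $v=bu_2d$ and instead engineers the spine asymmetrically: the right half is periodic $(wu)^\infty$, while on the left the blocks $w_{-i}$ are separated by buffers $(uw)^{m_i}$ whose lengths grow fast enough to dominate everything that follows them. The bridges connect carefully chosen positive vertices $M_i$ to negative vertices $N_i$. The strong unambiguity argument is then a monotonicity (``drift'') argument: whenever one path sits at vertex~$0$, the other is shown to lie strictly on the negative side, and after both paths return to the negative side the second one is strictly to the left of the first. Iterating, the second path can never reach vertex~$0$, contradicting the graph structure. The growing powers $(uw)^{m_i}$ together with the elementary non-overlap facts about $uwb$ and $dwu$ inside $(uw)^2$ are exactly what makes this ordering rigid; no per-bridge bookkeeping is needed.
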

\begin{proof}
Since $X$ is a coded shift, the set $\cL(X)$ is recognized by an
countable strongly connected automaton. The proof begins as that
of Theorem~\ref{theoremFiebig1.7}.
We may assume that there are finite words $ya$ and $yb$, where $a,b$ are
letters with $a \neq b$, labelling paths starting at some state $q$
of the automaton. Similarly
there are finite words $ct$, $dt$, where $c,d$ are letters with $c \neq
d$, labelling paths ending in $q$. Since the automaton is strongly
connected there are words $u_1, u_2$ such that $yau_1ct$,
$ybu_2dt$ are labels of cycling paths around $q$
(see Figure~\ref{figureu_1u_2Unambiguous}).
\begin{figure}[hbt]
\centering
\tikzset{node/.style={circle,draw,minimum size=0.4cm,inner sep=0.1cm}}
\tikzset{title/.style={minimum size=0cm,inner sep=0pt}}
\tikzstyle{every loop}=[->,shorten >=1pt,looseness=12]
\tikzstyle{loop above}=[in=50,out=130,loop]
\tikzstyle{loop below}=[in=230,out=310,loop]
\begin{tikzpicture}
  \node[node](c)at(0,1){};\node[node](d)at(0,-1){};
  \node[node,fill=red](v)at(1,0){$p$};
  \node[node](p)at(4,0){$q$};
  \node[node,fill=blue](u)at(7,0){$r$};
  \node[node](a)at(8,1){};\node[node](b)at(8,-1){};

  \draw[->,above,> = stealth](c)edge node{$c$}(v);\draw[->,above,> = stealth](d)edge node{$d$}(v);
  \draw[->,above,> = stealth](v)edge node{$t$}(p);\draw[->,above,> = stealth](v)edge node{$t$}(p);
  \draw[->,above,> = stealth](p)edge node{$y$}(u);\draw[->,above,> = stealth](p)edge node{$y$}(u);
  \draw[->,above,> = stealth](u)edge node{$a$}(a);\draw[->,above,> = stealth](u)edge node{$b$}(b);
  \draw[->,above,bend right,> = stealth](u)edge node{$u$}(v);
  \draw[->,above,bend right,> = stealth](a)edge node{$u_1$}(c);
  \draw[->,below,bend right,> = stealth](v)edge node{$w$}(u);
  \draw[->,below,bend left=60,> = stealth](u)edge node{$v$}(v);
  \draw[->,below,bend left,> = stealth](b)edge node{$u_2$}(d);
\end{tikzpicture}
\caption{The words $u_1,u_2,u,v,w$.}\label{figureu_1u_2Unambiguous}
\end{figure}
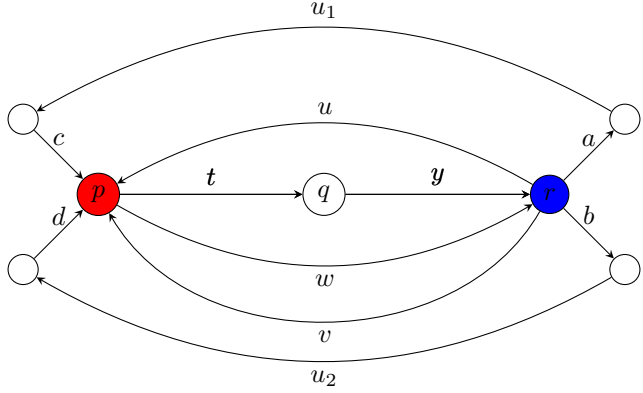
We now choose an enumeration 
 $(w'_i)_{i \ge 1}$  of the labels of all
finite paths from $q$ to $q$ with nonnegative indices, instead of
arbitrary ones. Let $w_{-i} = tw'_iy$, $w = ty$, $u =
au_1c$ and $v = bu_2d$. Note that the first and last letter of $u$ and
$v$ are distinct.

Inductively, we choose $m_i> 0$ such that
\begin{itemize}
\item the length of $(uw)^{m_1-1}$ is at least twice the length of $s_1=vw_{-1}v$,
\item the length of $(uw)^{m_2-1}$ is at least twice the length of
  $s_2=vw_{-2}(uw)^{m_1}s_1$,
  \item the length of $(uw)^{m_3-1}$ is at least twice the length of
    $s_3=vw_{-3}(uw)^{m_2}s_2$,
    \item and so on, where one always adds a word
      $vw_{-i-1}(uw)^{m_i}$ to the left.
 \end{itemize}

We denote by $x=(x_i)_{i \in \mathbb{Z}}$ the
bi-infinite word
$$\cdots
(uw)^{m_3}vw_{-3}(uw)^{m_2}vw_{-2}(uw)^{m_1}vw_{-1}u \cdot (wu)^\infty,
$$
where $x_0$ is equal to the first symbol of the right infinite periodic sequence $(wu)^\infty$.
It is the label in the graph of a path  shown below
\begin{displaymath}
  \cdots p\edge{w_{-2}}r\edge{(uw)^{m_1}}r\edge{v}p\edge{w_{-1}}r\edge{u}
  p\edge{wu}p\cdots
  \end{displaymath}
By construction the orbit of $x$ in $X$ is dense.

We define a new countable strongly connected automaton as follows. 
We start with the set of states $\mathbb{Z}$ and edges $(i, x_i,
i+1)$ (see Figure~\ref{figureNewAutomatonUnambiguous}).
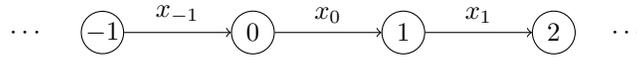
\begin{figure}[hbt]
\centering
\tikzset{node/.style={circle,draw,minimum size=0.3cm,inner sep=0.1cm}}
\tikzset{title/.style={minimum size=0cm,inner sep=0pt}}
\tikzstyle{every loop}=[->,shorten >=1pt,looseness=12]
\tikzstyle{loop above}=[in=50,out=130,loop]
\tikzstyle{loop below}=[in=230,out=310,loop]
\begin{tikzpicture}
  \node[title](g)at(-3,0){$\cdots$};
  \node[node,inner sep=0.5pt](-1)at(-2,0){$-1$};\node[node](0)at(0,0){$0$};
  \node[node](1)at(2,0){$1$};\node[node](2)at(4,0){$2$};

  \draw[->,above](-1)edge node{$x_{-1}$}(0);
  \draw[->,above](0)edge node{$x_0$}(1);
  \draw[->,above](1)edge node{$x_1$}(2);
  \node[title](d)at(5,0){$\cdots$};
  \end{tikzpicture}
\caption{The new automaton.}\label{figureNewAutomatonUnambiguous}
\end{figure}
Let $N_i < 0$ be the terminal vertex of the first $u$ in the factor
$(uw)^{m_i}vw_{-i}$ of $x$.
We choose an increasing sequence of integers $k_1 < k_2 < k_3 < \cdots$
with $k_1 = m_1$. For each $i >0$ we add a finite path labeled by $v$
from the positive terminal vertex $M_i$ of the path labeled by $(wu)^{k_i}w$
starting at the vertex $0$ to the vertex $N_i$ on the negative side
(see Figure~\ref{figureNewAutomaton2Unambiguous}).
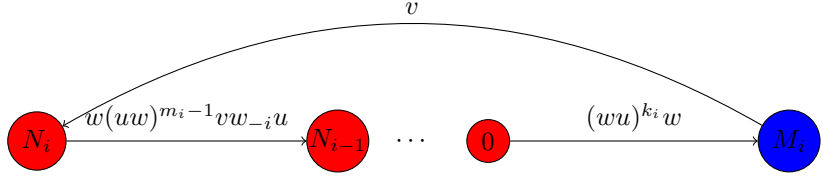
\begin{figure}[hbt]
\centering
\tikzset{node/.style={circle,draw,minimum size=0.3cm,inner sep=0.1cm}}
\tikzset{title/.style={minimum size=0cm,inner sep=0pt}}
\tikzstyle{every loop}=[->,shorten >=1pt,looseness=12]
\tikzstyle{loop above}=[in=50,out=130,loop]
\tikzstyle{loop below}=[in=230,out=310,loop]
\begin{tikzpicture}
  \node[node,fill=red](Ni)at(-6,0){$\small N_i$};
  \node[node,inner sep=0pt,fill=red](Ni-1)at(-2,0){$\small N_{i-1}$};
  \node[title]at(-1,0){$\cdots$};
  \node[node,fill=red](0)at(0,0){$0$};
  \node[node,fill=blue](Mi)at(4,0){$\small M_i$};

  \draw[->,above](Ni)edge node{$w(uw)^{m_i-1}vw_{-i}u$}(Ni-1);
  \draw[->,above](0)edge node{$(wu)^{k_i}w$}(Mi);
  \draw[->,bend right,above](Mi)edge node{$v$}(Ni);

   \end{tikzpicture}
\caption{Adding paths to the new automaton.}\label{figureNewAutomaton2Unambiguous}
  \end{figure}
By construction the new automaton
is strongly connected, deterministic and co-deterministic. The set of
labels of its finite paths is $\cL(X)$.

We now show that the automaton is strongly unambiguous, \ie that it has at most one
bi-infinite path with a given label.

For later use, we record the following results.
\begin{itemize}
\item{(1a)} The word $uwb$ does not occur as a factor of $uwuw$ at any
  position.
  Indeed, inside $uwuw$ pairs of symbols at distance $|uw|$ agree. But
  the first symbol of $u$ is $a \neq b$.
\item{(1b)}  For the same reason the word $dwu$ cannot occur as a
  factor of $wuwu$ at any position.
\end{itemize}

For later use, we also note the following result.
\begin{itemize}
\item{(2)} The label of every path starting at $N_i$ begins with
  $w(uw)^{m_i-1}$ and there is no path starting at a vertex $k$ with
  $N_i < k <0$ whose label begins with $w(uw)^{m_i-1}$. This is due to
  (1a), (1b) and the definition of $m_i$ ensuring that $w(uw)^{m_i-1}$
  is long enough.
\end{itemize}
We now show that the automaton is strongly unambiguous. Let us assume that there are
two bi-infinite paths of the automaton $z \neq \bar{z}$ with the same
label.

Then
\begin{itemize}
  \item{(3)} One has $z_i \neq \bar{z_i}$ for all $i \in \mathbb{Z}$ since
    the automaton is deterministic and co-deterministic.
  \end{itemize}
  By the construction of the automaton, there is $i \in \mathbb{Z}$
  such that $t(z_i) = 0$ where $t(z_i)$ denotes the terminal state of
  the edge $z_i$. Thus $t(\bar{z_i}) \neq 0$ by (3).

  We will show that $t(z_i) = 0$ and $t(\bar{z_i}) \neq 0$ implies:
  \begin{itemize}
  \item{(4a)} $t(\bar{z_i}) \in\{\ldots,-2,-1\}$.
  \item{(4b)} There is an $s > 0$ with $t(z_{i+s}) = 0$ and
    $t(\bar{z}_{i+k})  \neq 0$ for all $k \in \{0, \ldots, s\}$.
  \end{itemize}
  Repeating the argument (4b) implies that $t(\bar{z}_{i+k}) \neq 0$
  for all $k \geq 0$. This leads to a contradiction since (4a) and the
  graph structure forces $t(\bar{z_{i+k}})$ to be the vertex $0$ for some
  $k \geq 0$. This proves that the automaton is strongly unambiguous.

  We now prove (4a) and (4b). Without loss of generality we may assume
  that $i = 0$.
Thus the label of $\cdots z_{-1}z_0$ is $v$ and the label of $z_1z_2
\cdots$ is $(wu)^{k_1}w$ with $k_1 = m_1$. The same holds for
$\bar{z}$.

If $t(\bar{z_0}) \in \{1, 2, \ldots \}$, then
\begin{itemize}
  \item either $dwu$ (the label
of $\bar{z_0}\bar{z_1} \cdots$) is a factor of some $wuwu$ at the
positive vertices, which is impossible by (1b),
\item or $uwb$ (the labels of the first edge of a
connecting $v$-path plus the edges connecting previous positive
vertices) is a factor of $uwuw$ inside $(wu)^{k_1}$, which is
impossible by (1a).
\end{itemize}

If $t(\bar{z_0})$ is a vertex inside some $v$-path connecting the
positive to the negative vertices, then $dwu$ (the labels of the last
edge of the connecting $v$-path plus the following ones connecting the
negative vertices) is a factor of $wuwu$ inside $(wu)^{k_1}$ since
$k_1 = m_1$ is large, contradicting (1b).
Thus $t(\bar{z_0}) \in \{-1, -2, \ldots \}$ which proves (4a).

We now prove (4b). By (4a) we know that $t(\bar{z_0}) \in \{-1, -2,
\ldots \}$. The label of $\bar{z_1}\bar{z_2} \cdots$ starts with
$(wu)^{k_1}$ since the label of $z_1z_2 \cdots$ does.
By $k_1 = m_1$ and (2) we have $t(\bar{z_0}) \leq N_1$.
Since the labels to the left of $0$ are $uwvw_{-1}v$, by $(1)$
the label of $\bar{z_1}\bar{z_2} \cdots$ have to leave the sequence
$(wu)^\infty$
at some time $n$, at the latest when $\bar{z}$ reaches the edge of the
first symbol of $v$ in $vw_{-1}v$. Thus, at time $n$ the path
$\bar{z}$ is still on the negative vertices and at least $|w_{-1}v|$
steps from the vertex $0$. At the same time $n$ the path $z$ has to
start a $v$-path connecting the positive and negative vertices. Let
$N_i$ be the terminal vertex of this $v$-path. Since $|v| <
|w_{-1}v|$, at the time $z$ reaches $N_i$ the path $\bar{z}$ is still
on the negative vertices. After the vertex $N_i$ the path $z$ has to
read the word $w(uw)^{m_i-1}$. By (2) this shows that at this time the
path $\bar{z}$ is not only on the negative vertices but in fact in a
vertex less or equal to $N_i$. By (3) it must be a vertex strictly
less than $N_i$. This show that $\bar{z}$ is "to the left of" $z$, \ie
when $z$ eventually reaches vertex $0$ (which it must since there is
no path starting in a negative vertex and avoiding 0), the path
$\bar{z}$ will not have reached $0$. This proves (4b).  
\end{proof}
The   question of whether any coded shift
can be unambiguously coded is raised in \cite{BurrDasWolfYang2020}.
We obtain easily a positive answer using Theorem~\ref{theoremFiebigRemark1.8}.
\begin{corollary}
  Every coded shift is unambiguously coded.
\end{corollary}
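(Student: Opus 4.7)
The plan is to derive the corollary directly from Theorem~\ref{theoremFiebigRemark1.8}. Given a coded shift $X$, that theorem supplies a countable strongly connected automaton $\A$ recognising $\cL(X)$ which is both reversible (in particular deterministic) and strongly unambiguous. From $\A$ I will extract an explicit strong code $C$ coding $X$.

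First I will fix an arbitrary state $i$ of $\A$ and let $C$ be the set of labels of simple paths from $i$ to $i$. Determinism of $\A$ makes $C$ a prefix code: if a word $u \in C$ were a proper prefix of some $v \in C$, then reading $u$ from $i$ would return to $i$, forcing the path labelled $v$ to revisit $i$ strictly before its endpoint and contradicting simplicity. The set $C^*$ is then exactly the set of labels of cycles at $i$, and by strong connectivity every word of $\cL(X)$ is a factor of some such cycle. Hence $\cL(X)$ is the set of factors of $C^*$, that is, $X$ is coded by $C$.

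Next I will check that $C$ is a strong code. Suppose $x \in A^\Z$ admits two representations coming from pairs $((c_n)_{n\in\Z}, k)$ and $((c'_n)_{n\in\Z}, k')$. Concatenating the simple cycles at $i$ labelled $c_n$ yields a bi-infinite path in $\A$; shifting it by $k$ gives a bi-infinite path of label $x$, and similarly for the primed data. Strong unambiguity of $\A$ forces these two bi-infinite paths to coincide as labelled paths. Hence the set of positions in $\Z$ at which the common path visits $i$ is the same in both representations, and the largest non-positive element of this set equals both $-k$ and $-k'$; so $k = k'$, and reading off the consecutive segments cut out between successive visits to $i$ then gives $(c_n) = (c'_n)$.

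The only real subtlety is the final bookkeeping that reconciles the offset $k$ and the factorisation $(c_n)$ with the positions at which the underlying bi-infinite path passes through $i$; but this is routine once strong unambiguity has pinned down the path. The substantive work has already been done in Theorem~\ref{theoremFiebigRemark1.8}, and the corollary is essentially a repackaging of its conclusion into the language of strong codes.
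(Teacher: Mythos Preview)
Your proof is correct and follows exactly the same approach as the paper: take the strongly unambiguous reversible automaton furnished by Theorem~\ref{theoremFiebigRemark1.8}, and let $C$ be the first-return (simple-path) code at a chosen state. The paper's proof is a two-sentence sketch of precisely this argument; you have simply filled in the details of why $C$ is prefix, why it codes $X$, and why strong unambiguity of $\A$ forces uniqueness of the pair $((c_n),k)$.
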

\begin{proof}
  Every coded shift is recognized by a deterministic,
  co-deterministic and strongly unambiguous automaton by the previous theorem. The set of
  first returns to some state $q$ of this automaton
  (that is, the labels of paths from $q$ to itself
  which do not pass by $q$ inbetween) defines a prefix code $C$
  such that the shift is unambiguously coded by $C$.
\end{proof}

This result can be used to compute the topological entropy
of a coded shift or its topological pressure
(see~\cite{BurrDasWolfYang2020} and \cite{Pavlov2018}).

\section{Synchronized shifts}
A word $w\in C^*$ is \emph{synchronizing} for a prefix code $C$
if for every $u,v\in A^*$, one has
\begin{equation}
  uwv\in C^*\Rightarrow uw,v\in C^*.\label{eqSynchro}
\end{equation}
A prefix code $C$ on the alphabet $A$
is \emph{synchronized} if there is a synchronizing word.
For an introduction to the notions
concerning codes, see \cite{BerstelPerrinReutenauer2009}.
A shift space  is
said to be a \emph{synchronized coded shift}
it it can be defined by a synchronizing prefix code.

As a closely related notion, a
word $w$ is a \emph{constant} for a language $L$
if it is a factor of $L$ and
if for every $u,v,u',v'\in A^*$, one has
\begin{displaymath}
  uwv,u'wv'\in L\Leftrightarrow uwv',u'wv\in L.
\end{displaymath}
Thus a word of $C^*$ is a constant for $C^*$ if and only if it is synchronizing.
A word $w$ is a constant for $L$ if and only if there
is a path labeled $w$ in the minimal automaton of $L$
and if all these paths end in  the same state.

When $L$ is a factorial language, the definition of a constant takes
a simpler form. Indeed, $w$ is a constant if and only if
\begin{equation}
  uw,wv\in L\Rightarrow uwv\in L\label{eqMagic}
\end{equation}
for every $u,v\in A^*$. Indeed it  is clear that  a constant
satisfies \eqref{eqMagic}. Conversely, if $w$ satisfies \eqref{eqMagic}
for all $u,v\in A^*$, assume that $uwv,u'wv'\in L$. Since
$L$ is factorial, we have also $uw,wv'\in L$ and thus $uwv'\in L$
by \eqref{eqMagic}. The proof that $u'wv\in L$ is similar.
Condition \eqref{eqMagic} is the one used to
define \emph{intrinsically synchronizing words} for shift
spaces (see \cite[Exercise 3.3.4]{LindMarcus2021}).

The following property gives a characterization of synchronized
shifts independant of the prefix code used to code the shift.
A deterministic
automaton $\A=(Q,i,i)$ is synchronized if
it is strongly connected and there exists a word $w$
such that $\Card\{p\cdot w\mid p\in Q \}=1$.

A stongly connected component $R\subset Q$ of an automaton $\A$
is said to be \emph{maximal} if for every edge $r\edge{a}s$
with $r\in R$, one has $s\in R$. The following statement is proved in
\cite{FiebigFiebig1992}.

\begin{proposition}\label{propositionCharacSynchro}
  An irreducible shift space $X$ is a synchronized coded shift
  if and only if the minimal automaton of $\cL(X)$
  has a unique maximal strongly connected component which is synchronized.
\end{proposition}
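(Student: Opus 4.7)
The plan is to prove both directions by translating between the code-theoretic synchronization condition $uwv\in C^*\Rightarrow uw,v\in C^*$ and the automaton-theoretic property that $w$ is a constant for $L:=\cL(X)$, \ie all paths in $\A(L)$ labelled $w$ terminate at a common state $q_0$. Since $L$ is factorial and extendable, every state of $\A(L)$ is reachable from the initial state $i$ and is terminal, so I can freely move between the statement ``$u\in L$'' and the statement ``$u$ labels a path from $i$''.

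For the forward direction, I would start with a synchronized prefix code $C$ coding $X$, with synchronizing word $w\in C^*$, and first verify that $w$ is a constant for $L$: any $uw,wv\in L$ occur as factors of words $puws,p'wvs'\in C^*$, and two applications of synchronization yield $puw\in C^*$ and $vs'\in C^*$, whose concatenation contains $uwv$ as a factor. This gives a single state $q_0=w^{-1}L$ into which every $w$-path leads. I would then take $R$ to be the strongly connected component of $q_0$ and use irreducibility to show every state of $\A(L)$ reaches $q_0$ (extend any $u\in L$ to some $u\beta w\in L$ and invoke constancy). This immediately gives both that $R$ is maximal and that it is the only maximal component. Synchronization of $R$ then follows with $w$ itself: $ww\in C^*\subseteq L$ makes $q_0\cdot w$ defined, and constancy forces it to equal $q_0$.

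For the converse, I would begin with a synchronizing word $w_0$ for $R$ mapping every state where it is defined to a common $q_0\in R$, and replace it by $w:=\alpha w_0$ where $\alpha$ is a path in $R$ from $q_0$ to a state from which $w_0$ can be read; this makes $q_0\cdot w=q_0$, so $w$ labels a cycle at $q_0$, \ie $w\in C^*$, where $C$ is the prefix code of labels of simple cycles at $q_0$. To show $w$ synchronizes $C$, I would trace the path of $uwv\in C^*$ from $q_0$: it passes through $q_0\cdot u\in R$ (since $R$ is terminal), and the synchronization of $R$ forces $q_0\cdot uw=q_0$, so $uw\in C^*$, and then $v\in C^*$ follows. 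To check that $C$ codes $X$, the inclusion $\mathrm{factors}(C^*)\subseteq L$ is clear; for the reverse one, given $u\in L$ and some $v\in L$ with $i\cdot v=q_0$, irreducibility provides $\gamma$ with $v\gamma u\in L$, and since $R$ is strongly connected I can append $\delta$ returning the path to $q_0$, so that $\gamma u\delta\in C^*$ contains $u$ as a factor.

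The main subtlety lies in the converse, where the reset word supplied by the hypothesis on $R$ must be modified so as to label a cycle at $q_0$ and thereby qualify as a synchronizing word of the code rather than only as a reset word of the automaton. Everything else amounts to careful bookkeeping with paths in a deterministic automaton in which every state reaches the unique sink component $R$.
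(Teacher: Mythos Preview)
Your proposal is correct and follows essentially the same approach as the paper. In the forward direction you make explicit use of the constant terminology (showing $uw,wv\in L\Rightarrow uwv\in L$), whereas the paper computes the residual $i\cdot uvw=i\cdot w$ directly; these are the same computation in different language. In the converse you spell out the adjustment $w=\alpha w_0$ needed to make the reset word into a genuine cycle at $q_0$ and you verify that $C$ actually codes $X$, both of which the paper leaves as ``easy to see''; so your argument is the paper's, just with the details filled in.
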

\begin{proof}
  Let $\A=(Q,i,T)$ be the minimal automaton of $\cL(X)$.
  Assume first that $X$ is coded by a synchronizing
  prefix code $C$.  Let $w\in C^*$ be a synchronizing word for $C$.
  Since $X$ is irreducible, for every $u\in \cL(X)$ there
  is a word $v$ such that $uvw\in\cL(X)$. Let us show
  that $i\cdot uvw=i\cdot w$. Indeed, note first that since $uvw\in \cL(X)$,
  there exist words $p,s$ such that $puvws\in C^*$. Since $w$ is synchronizing,
  this implies that $puvw\in C^*$. Assume now that $uvwt\in \cL(X)$.
  Then $wt\in\cL(X)$. Conversely, if $wt\in\cL(X)$, there are words $q,r$
  such that $qwtr\in C^*$. Since $w$ is synchronizing, we have
  $tr\in C^*$. Thus $(puvw)(tr)$ is in $C^*$ and thus $uvwt\in\cL(X)$.
  This shows that the strongly connected component of $i\cdot w$
  is the unique maximal strongly component of $\A$ and also that
  it is a synchronized automaton.

  Conversely, if $\A$ has
  a unique maximal strongly connected component $M\subset Q$ which is synchronized,
  let $q$ be an element of $M$ and let $C$ be the set
  of labels of paths from $q$ to $q$ which do not pass by $q$
  in between. Let  $w$ be a synchronizing word for $M$
  such that all paths labeled $w$ end in $q$.
 It is easy to see that $w$ can be extended in a synchronizing word for $C$.
  \end{proof}

The following statement is well known (see~\cite[Proposition 3.3.16]{LindMarcus2021}). 
\begin{proposition}\label{propositionSynchronized}
Every irreducible sofic shift is a synchronized coded shift.
\end{proposition}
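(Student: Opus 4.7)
The plan is to invoke Proposition~\ref{propositionCharacSynchro}: it suffices to show that the minimal automaton $\A=(Q,i,Q)$ of $\cL(X)$ has a unique maximal strongly connected component $M$ and that $M$ is synchronized. Since $X$ is sofic, $\cL(X)$ is rational, so $\A$ is finite; this is the only place where soficness enters.

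The key lemma I would establish first is the existence of a word $w_0\in\cL(X)$ such that every path in $\A$ labeled $w_0$ ends at a common state $q_0$. For this, define $\Phi(w)=\{p\cdot w\mid p\in Q,\ p\cdot w\text{ defined}\}$ and pick $w_0\in\cL(X)$ minimizing $|\Phi(w_0)|$. Arguing by contradiction, if $p\neq q$ both lay in $\Phi(w_0)$, the minimality of $\A$ would yield distinct future languages $L_p\neq L_q$, hence (up to swapping $p$ and $q$) a word $v$ with $p\cdot v$ defined and $q\cdot v$ undefined. Since $p\in\Phi(w_0)$ is reached, $w_0 v\in\cL(X)$, and $\Phi(w_0 v)=\{s\cdot v\mid s\in\Phi(w_0),\ s\cdot v\text{ defined}\}$ would be nonempty (it contains $p\cdot v$) but of cardinality strictly less than $|\Phi(w_0)|$, contradicting the choice of $w_0$.

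Next, using irreducibility, for any state $p=i\cdot v$ there is $u$ such that $vuw_0\in\cL(X)$, so $p\cdot uw_0=q_0$. Hence every state of $\A$ reaches $q_0$, which forces the strongly connected component $M$ of $q_0$ to be the unique maximal strongly connected component of $\A$. This proves part (i). For part (ii), I would rerun the same minimization but restricted to $M$: set $\Phi_M(w)=\{r\cdot w\mid r\in M,\ r\cdot w\text{ defined}\}$ and minimize over words $w$ with $\Phi_M(w)\neq\emptyset$. Maximality of $M$ guarantees that $\Phi_M(w)\subseteq M$ and that the future language in $\A$ of every state of $M$ coincides with its future language inside the subautomaton on $M$, so the distinguishing-word argument transplants verbatim and produces some $w$ with $|\Phi_M(w)|=1$, which is exactly the synchronization of $M$.

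The main obstacle is the first step, the existence of $w_0$: although the argument is short, it relies crucially on the fact that in a finite minimal deterministic automaton two distinct states have distinct futures, so a difference in future can always be exploited to shrink $\Phi$. Once that step is secured, parts (i) and (ii) drop out from irreducibility and a second application of the same minimization, and Proposition~\ref{propositionCharacSynchro} then packages everything into the desired conclusion.
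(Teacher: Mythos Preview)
Your argument is correct. The crucial observation---that in the minimal automaton of $\cL(X)$ every state is terminal (because $\cL(X)$ is factorial), so distinct states have distinct \emph{domains} and a distinguishing word can always be used to strictly shrink $\Phi(w)$---is exactly what is needed, and your use of irreducibility to show that the strongly connected component of $q_0$ is the unique maximal one is fine.

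The paper proceeds slightly differently: it simply invokes Lemma~\ref{lemmaFischer}. Since $X$ is sofic, one has a finite strongly connected deterministic automaton presenting $X$, so the image set $I(w)$ is automatically finite and nonempty for every $w\in\cL(X)$; the lemma then manufactures a synchronized automaton via a subset construction on the minimal image sets $I(wu)$. Your route avoids this detour through Lemma~\ref{lemmaFischer} and instead exploits minimality of the automaton directly via the distinguishing-word trick, then packages the conclusion through Proposition~\ref{propositionCharacSynchro}. Both arguments rest on the same finiteness, but yours stays inside the minimal automaton of $\cL(X)$ throughout, which is arguably cleaner.

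One small redundancy: your part~(ii) is not needed. The word $w_0$ you produced in part~(i) already satisfies $\Phi(w_0)=\{q_0\}$ with $q_0\in M$, and since $M$ is maximal there is some $r\in M$ with $r\cdot w_0$ defined (e.g.\ take $r=q_0\cdot u$ for any $u$ with $w_0uw_0\in\cL(X)$). Hence $\Phi_M(w_0)=\{q_0\}$ and $M$ is synchronized by $w_0$ itself; there is no need to rerun the minimization inside $M$.
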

The proof relies on the following statement.
\begin{lemma}\label{lemmaFischer}
  A deterministic strongly connected automaton $\A=(Q,i,i)$
  is synchronized if and only if
  there exists a word $w$ such that the set $I(w)=\{q\in Q\mid p\edge{w}q\}$
  is  finite and nonempty.
\end{lemma}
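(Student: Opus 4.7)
The forward implication is immediate: if $\A$ is synchronized by a word $w$, then by definition $\Card\{p\cdot w\mid p\in Q\}=1$, so $I(w)$ is a singleton, hence finite and nonempty.

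For the converse, my plan is a minimality argument. Among all words $v$ such that $I(v)$ is finite and nonempty, pick $w$ so that $n:=|I(w)|$ is minimal, and write $J=I(w)=\{q_1,\ldots,q_n\}$. The goal is to show $n=1$; suppose for contradiction $n\ge 2$. The key structural consequence of minimality is an all-or-nothing dichotomy: for any word $u$, the set $I(wu)=\{q_i\cdot u\mid q_i\cdot u\text{ defined}\}$ is contained in $Q$, has cardinality at most $n$, and is itself of the form $I(w')$ for $w'=wu$, so by the minimal choice of $n$ either $I(wu)=\emptyset$ or $|I(wu)|=n$. In the latter case $u$ is necessarily defined on every $q_i\in J$ and the induced map $q\mapsto q\cdot u$ is an injection from $J$ into $Q$.

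Strong connectivity is then brought in to produce a contradiction with this rigidity. Picking distinct $p,q\in J$, strong connectivity yields a word $u$ with $p\cdot u=q$; the dichotomy forces $u$ to be defined on all of $J$ and to act injectively, so $J':=I(wu)$ is another $n$-element minimal-image set containing $q$ but generally not equal to $J$. The aim is then to find a further word $v$ such that two distinct elements of $J$ are carried by $uv$ into the same $w$-fiber $w^{-1}(q_0)$ for some $q_0\in J$; postcomposing with $w$ would then collapse them and give $|I(wuvw)|<n$, contradicting the minimality of $n$. One produces $v$ by iterating the procedure: using strong connectivity one moves the minimal-image set around $Q$, and the rigidity that every word is either undefined or injective on the current $n$-element set severely constrains the possibilities.

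The main obstacle is precisely this last combinatorial step---leveraging strong connectivity, together with the all-or-nothing injectivity, to exhibit an actual merge of two elements of some minimal-image set. The key idea is that $w$ itself compresses a large set (namely $w^{-1}(J)$) onto the finite set $J$, so some fiber $w^{-1}(q_0)$ must contain at least two states; one then uses strong connectivity and the preceding rigidity to steer two distinct elements of $J$ simultaneously into that fiber, which forces the desired collision and hence $n=1$.
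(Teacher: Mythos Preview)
Your forward direction is fine, and your all-or-nothing dichotomy for the sets $I(wu)$ is correct and useful. The gap is in the last step: you assert that ``some fiber $w^{-1}(q_0)$ must contain at least two states,'' but this needs the domain $D(w)=\{p\in Q:p\cdot w\text{ is defined}\}$ to have strictly more than $n$ elements, and nothing in the hypotheses guarantees that. If $w$ happens to act as a bijection $D(w)\to J$ then every fiber is a singleton, and the rest of your plan---steering two distinct elements of $J$ into a common fiber and then collapsing them with $w$---never gets started.

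This gap is not repairable, because the lemma as literally stated is false. Take $Q=\{0,1\}$ with a single letter $a$ acting by $0\cdot a=1$, $1\cdot a=0$. This automaton is deterministic and strongly connected, and $I(a)=\{0,1\}$ is finite and nonempty; yet every word $a^k$ satisfies $\Card\{p\cdot a^k:p\in Q\}=2$, so $\A$ is not synchronized. Your dichotomy correctly returns $n=2$ here, and no extension shrinks it to $1$.

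What the paper's argument actually establishes (and what the applications to Proposition~\ref{propositionSynchronized} and Theorem~\ref{theoremCodedVeryThin} need) is that the \emph{shift} recognized by $\A$ is a synchronized coded shift. It does this not by forcing $n=1$ inside $\A$, but by passing to the subset automaton $\A'$ whose states are the minimal-cardinality image sets $I(wu)$. In $\A'$ the word $w$ itself is synchronizing, since for every such state one has $I(wu)\cdot w\subseteq I(w)$, and both sides have cardinality $n$ whenever the left side is nonempty, hence they are equal. Thus the correct target is $\A'$, not $\A$; your minimality argument is exactly the right first move, but the conclusion to draw from it is that the minimal image sets form the state set of a new synchronized automaton recognizing the same language, rather than that $\A$ itself has a singleton image.
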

\begin{proof}
    Let $R$ be the set of finite nonempty subsets of $Q$ of the form $I(wu)$
  for $wu\in A^*$ which are of finite nonzero minimal cardinality.
  By assumption, this set is not empty. For every $I=I(wu)\in R$ and every
  $x\in \cL(X)$, there is a word $v$ such that
  $wuvx\in\cL(X)$ and consequently
  $I\cdot vx=I(wuvx)\in R$. Thus $\cL(X)$ is  the set of labels
  of  paths
  in the automaton $\A'=(R,I(w),I(w))$.
  Since $\A'$ is a synchronized automaton, this completes the proof.
  \end{proof}
Proposition~\ref{propositionSynchronized} is an easy consequence
of Lemma~\ref{lemmaFischer}.

\begin{example}
  The code $C=\{a,bb\}$ is synchronized because $a$ is a synchronizing
  word. This shows that the even shift is a synchronised coded shift.
\end{example}

The following statement is a particular case of Theorem~\ref{theoremFiebigRemark1.8}. We give an independent proof with a different and substantially
simpler construction. Instead of building an entirely new automaton,
as in the proof of Theorem~\ref{theoremFiebigRemark1.8}, we
modify the automaton in a way that preserves its structure
(for example, if the first automaton is finite, the new automaton
is also finite).
\begin{theorem}\label{theoremMain}
Every synchronized coded shift is unambiguously coded.
\end{theorem}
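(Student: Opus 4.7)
My plan is, starting from a synchronizing prefix code $C$ coding $X$ with synchronizing word $w \in C^*$, to define the refined prefix code
\[
C' \;=\; \{u w : u \in C^*,\; u w \in C^*, \text{ and } w \text{ occurs as a factor of } u w \text{ only as the suffix}\}
\]
and to show that (i) $C'$ is a prefix code, (ii) $X$ is coded by $C'$, and (iii) $C'$ is a strong code. Conclusion (iii) is exactly the statement that $X$ is unambiguously coded by $C'$.

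Claims (i) and (ii) should be routine. For (i), if $d_1$ were a proper prefix of $d_2$ in $C'$, then $d_2$ would contain $w$ both at position $|d_1| - |w|$ (as the suffix of $d_1$) and at position $|d_2| - |w|$ (as its own suffix), contradicting the "$w$ only as suffix" clause. For (ii), $C'^* \subseteq C^*$ gives every factor of $C'^*$ back inside $\cL(X)$, and in the other direction every factor of $C^*$ can be sandwiched between copies of $w$ (using $w \in \cL(X)$ together with irreducibility of $X$) to become a factor of an element of $C'^*$.

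The substantive step is (iii). Suppose $x \in A^\Z$ admits two distinct $C'$-factorizations. Each codeword ends with the suffix $w$, so every boundary of each factorization is preceded by an occurrence of $w$ in $x$, and hence $x$ contains infinitely many occurrences of $w$ in both directions. The synchronizing hypothesis $u w v \in C^* \Rightarrow u w, v \in C^*$ then shows that the position immediately after any $w$-occurrence in $x$ is a $C$-boundary in any $C$-factorization, and so in particular in either of the two $C'$-factorizations (since $C'^* \subseteq C^*$). Combined with the "$w$ only as suffix" clause, this gives the forward recursion
\[
  n_{i+1} - |w| \;=\; \min\{m \ge n_i : w \text{ occurs at position } m \text{ in } x\}
\]
for the sequence $(n_i)$ of $C'$-boundaries, and a symmetric backward recursion based on the requirement that $n_{i-1}$ be itself immediately preceded by a $w$-occurrence. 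Together these recursions determine the boundary sequence from any single boundary and so from $x$ alone, forcing the two factorizations to agree.

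The main obstacle I expect is the backward step when $w$ is bordered (has a non-trivial self-overlap), in which case two $w$-occurrences in $x$ can sit within distance less than $|w|$ and each can a priori serve as the suffix of a previous codeword. My plan to handle this is to replace $w$, if necessary, by a longer synchronizing word — for instance of the form $w c w$ for a suitable $c \in C$ — chosen so that its relevant occurrences in $\cL(X)$ cannot overlap in the problematic way, and then to apply the above argument with this refined $w$. The resulting $C'$ remains finite when $C$ is finite, which is consistent with the remark preceding the statement that the construction preserves the structure of the automaton (and will be needed for the corollary on irreducible sofic shifts).
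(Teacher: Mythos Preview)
Your plan is close in spirit to the paper's---both cut the bi-infinite word at occurrences of a distinguished word $w$---but the precise code $C'$ you write down leaves a real gap, and the repair you sketch does not close it. First, the clause $u\in C^*$ makes $C'$ too small: for $C=\{a,ba\}$ with synchronizing word $w=a$, every nonempty $u\in C^*$ already contains $a$, so $C'=\{a\}$ and step~(ii) fails outright. More seriously, the ``symmetric backward recursion'' is not well-defined when $w$ is bordered. If $w$ occurs in $x$ at overlapping positions $p<q<p+|w|$, then either $p+|w|$ or $q+|w|$ can serve as $n_{i-1}$: in both cases the codeword $x[n_{i-1},n_i)$ contains $w$ only as its suffix, because the competing $w$-occurrence straddles the left boundary and is therefore not a factor of that codeword. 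Passing to $wcw$ does not help in general: for $C=\{a,ba\}$ every nonempty word in $C^*$ ends in $a$, so every $wcw$ with $w=a$ and $c\in C^*$ begins and ends with $a$, and one checks that the corresponding one-letter self-overlap always lies in $\cL(X)$. (An unbordered synchronizing word does exist here, namely $ba$, but you give no argument that one can always be found.) Finally, $C'$ is not finite when $C$ is---already $C=\{a,bb\}$, $w=a$ gives $(bb)^*a$; the property needed for the sofic corollary is rationality.

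The paper's construction avoids all of this by marking \emph{every} position at which the preceding $|w|$ letters equal $w$. Concretely, it forms the product of the minimal automaton of $C^*$ with the sliding-window automaton of width $|w|$, and takes $C'$ to be the set of first-return labels to the state $(w,q_w)$; here $w$ is only required to be a constant for $C^*$, not an element of $C^*$. Because every $w$-ending position is a boundary, overlaps are harmless (return words are allowed to be shorter than $|w|$), and the factorization of any bi-infinite label is visibly unique: its boundaries are exactly the indices $i$ with $x_{i-|w|}\cdots x_{i-1}=w$, a condition that depends on $x$ alone.
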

\begin{proof}
  Let $X$ be a coded shift defined by a synchronized prefix code $C$.
  Since there are synchronizing words for $C$, there
  are constants for $C^*$.
  Let $w\in A^*$ be a constant for  $C^*$
  and let $n$ be the length of $w$. 

  We consider the following automaton $\A$. The set of states
  $Q$ is the set of pairs $(u,p)$ formed of a word $u$ of length $n$ in $\cL(X)$
  and an element $p$
  of the set $P$ of states of the minimal automaton of $C^*$.
  Next, set  $(u,p)\cdot a=(v,p\cdot a)$
  where $v$ is such that $ua=bv$ for some $b\in A$.
 Since $w$ is a constant,
 there is a  state $(w,q_w)$ in $Q$ such that a path ends in $(w,q_w)$
 if and only if its label ends with $w$.

  Let $C'$ be the set of labels of simple paths from $(w,q_w)$
  to itself (such a path is simple if it does not pass by $(w,q_w)$
  in between). Then $X$ is coded by $C'$. Indeed, let $u,v$ be
  words with  $u$ ending with $w$  such
  that there is a path $i\edge{u}q_w\edge{v}i$ where $i$ is the
  initial and terminal state of $\A(C^*)$. If $c\in C^*$,
  then $q_w\edge{v}i\edge{cu}q_w$ and thus $vcu\in C'^*$.
  Conversely, if $c\in C'^*$, it is the label of path in $\A(C^*)$
  and thus is a factor of $C^*$.
Thus the factors of
  $C^*$ and $C'^*$ are the same.

  Consider an infinite
  path $\cdots q_{-1}\edge{a_{-1}}q_0\edge{a_0}q_1\cdots$
  with label $x=\cdots a_{-1}a_0a_1\cdots$ in $G$. We have
  $q_i=(w,q_w)$ if and only if the left infinite sequence
  $\cdots a_{i-2}a_{i-1}$ ends with $w$.
  
  It follows  that $X$ is unambiguously coded by $C'$
  since the sequence $c=(c_n)$
  corresponds to the labels of the paths between consecutive occurrences
  of $(w,q_w)$, with $c_0$ ending at the least $q_i=(q,q_w)$ with $i\ge 1$.
  The unique exponent $k$ with $0\le k<|c_0|$
  such that $x=\varphi^k(c)$ is then $k=|c_0|-i$.
\end{proof}
We note the following corollary.
\begin{corollary}\label{corollarySofic}
  Every irreducible sofic shift is unambigously coded by
  a rational prefix code.
\end{corollary}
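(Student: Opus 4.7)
The plan is to combine Proposition~\ref{propositionSynchronized} with Theorem~\ref{theoremMain}, and simply track rationality through both steps. Neither ingredient by itself produces a \emph{rational} unambiguous prefix code, but because $X$ is sofic, every intermediate automaton that appears will turn out to be finite.

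First I would re-examine the proof of Proposition~\ref{propositionSynchronized}. Since $X$ is irreducible and sofic, the language $\cL(X)$ has a finite minimal automaton $\A=(Q,i,T)$. The construction of Lemma~\ref{lemmaFischer} then produces a family $R$ of subsets $I(wu)\subseteq Q$ of minimal nonzero cardinality, and since $Q$ is finite, so is $R$. The automaton $\A'=(R,I(w),I(w))$ is therefore a \emph{finite} deterministic strongly connected synchronized automaton recognizing $\cL(X)$, and the synchronizing prefix code $C$ coding $X$, taken as the set of first returns to $I(w)$ in $\A'$, is rational (it is the set of labels of a rational family of simple paths in a finite automaton).

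Next I would apply the construction inside the proof of Theorem~\ref{theoremMain} to this rational synchronized prefix code $C$. Pick a constant $w$ for $C^*$ of length $n$. The state set of the auxiliary automaton $\A$ built there is $Q=\cL_n(X)\times P$, where $P$ is the set of states of the minimal automaton $\A(C^*)$. Rationality of $C$ makes $C^*$ rational, so $P$ is finite; $\cL_n(X)$ is finite because $A$ is; hence $Q$ is finite. The code $C'$ of Theorem~\ref{theoremMain}, defined as the set of labels of simple paths from $(w,q_w)$ to itself in $\A$, is therefore the return code at a single state of a finite deterministic automaton, hence a rational prefix code. Theorem~\ref{theoremMain} guarantees that $X$ is unambiguously coded by $C'$, which is exactly the claim.

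There is no serious obstacle here; the only thing to verify is that rationality is preserved at each step, which reduces to the finiteness of the minimal automaton of $\cL(X)$. The one subtle point worth double-checking is that the accessible part of $\A$ is strongly connected, so that $C'$ genuinely codes $X$; this is already implicit in the proof of Theorem~\ref{theoremMain} and follows from irreducibility of $X$ together with the fact that $w$ is a constant.
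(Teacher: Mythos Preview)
Your proposal is correct and follows essentially the same route as the paper: use Proposition~\ref{propositionSynchronized} to see that $X$ is synchronized, then observe that the construction in the proof of Theorem~\ref{theoremMain} yields a finite automaton (hence a rational $C'$) when the input is sofic. The paper states this in two lines, relying on its earlier remark that the construction of Theorem~\ref{theoremMain} preserves finiteness of the automaton; you have simply unpacked that remark by tracking finiteness of $P$ and of $\cL_n(X)\times P$ explicitly, which is exactly the right justification.
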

Indeed, if $X$ is an irreducible sofic shift, it is synchronized
by Proposition~\ref{propositionSynchronized}. The prefix
code $C'$ build in the proof of Theorem~\ref{theoremMain}
is rational.

We illustrate the proof on two examples.
\begin{example}
  Let $X$ be the even shift, which is coded by $C=\{a,bb\}$.
  The letter $a$ is synchronizing for $C$ and the prefix code $C'=(bb)^*a$
  of Example~\ref{exampleEvenUnambiguous} is the result of the construction in the proof
  of Theorem~\ref{theoremMain}.
\end{example}
\begin{example}\label{exampleabba}
  Consider the shift coded by $C=\{ab,ba\}$. The minimal automaton
  of $C^*$ is represented in Figure~\ref{figureabba}.
  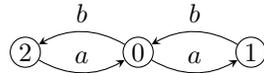
\begin{figure}[hbt]
\centering
\tikzset{node/.style={circle,draw,minimum size=0.4cm,inner sep=0.4pt}}
\begin{tikzpicture}
  \node[node](2)at(0,0){$2$};
  \node[node](0)at(1.5,0){$0$};
  \node[node](1)at(3,0){$1$};
  \draw[above,->,>=stealth,bend right](0)edge node{$b$}(2);
  \draw[above,->,>=stealth,bend right](2)edge node{$a$}(0);
  \draw[above,->,>=stealth,bend right](0)edge node{$a$}(1);
  \draw[above,->,>=stealth,bend right](1)edge node{$b$}(0);
\end{tikzpicture}
\caption{The minimal automaton of $\{ab,ba\}^*$.}\label{figureabba}
\end{figure}
  The word $w=bb$ is a constant since all paths labeled $bb$
  end in $2$. The automaton $\A$ built in the proof is
  represented in Figure~\ref{figureGraph}.
  \begin{figure}[hbt]
\centering
\tikzset{node/.style={circle,draw,minimum size=0.4cm,inner sep=0.4pt}}
\begin{tikzpicture}
  \node[node](aa)at(0,0){$aa,1$};
  \node[node](ab0)at(1.5,1){$ab,0$};\node[node](ba1)at(1.5,2.5){$ba,1$};
  \node[node](ba0)at(1.5,-1){$ba,0$};
  \node[node](ab2)at(1.5,-2.5){$ab,2$};
  \node[node](bb)at(3,0){$bb,2$};

  \draw[above,->,>=stealth,bend left](aa)edge node{$b$}(ab0);
  \draw[left,->,>=stealth,bend left](ab0)edge node{$a$}(ba1);
  \draw[right,->,>=stealth,bend left](ba1)edge node{$b$}(ab0);
  \draw[above,->,>=stealth,bend left](ab0)edge node{$b$}(bb);
  \draw[above,->,>=stealth,bend left](bb)edge node{$a$}(ba0);
  \draw[right,->,>=stealth,bend left](ba0)edge node{$b$}(ab2);
  \draw[left,->,>=stealth,bend left](ab2)edge node{$a$}(ba0);
  \draw[above,->,>=stealth,bend left](ba0)edge node{$a$}(aa);
\end{tikzpicture}
\caption{The automaton $\A$.}\label{figureGraph}
  \end{figure}
  
  The code $C'$ of first returns to $(bb,2)$ is
  \begin{displaymath}
    C'=a(ba)^*ab(ab)^*b.
    \end{displaymath}
\end{example}
A  code $C$ on the alphabet $A$
is \emph{circular} if for every $u,v\in A^*$ one has
\begin{equation}
  uv,vu\in C^*\Rightarrow u,v\in C^*.\label{eqCircular}
\end{equation}
If  the shift $X$ coded by $C$ is unambiguously coded, then $C$
is a circular code. Indeed, if $uv,vu$ are in $C^*$ although $u,v$ are not,
the bi-infinite sequence $(uv)^\infty$ has two factorizations
in words of $C$. To see this in more detail, set $uv=c_1\cdots c_n$
and $vu=d_1\cdots d_m$ with $c_i,d_i\in C$. Then we have a factorization
$c_i=ps$ with $p$ nonempty such that $v=sc_{i+1}\cdots c_n$
and $u=c_1\cdots c_{i-1}p$.
Then the equality
\begin{displaymath}
  (c_ic_{i+1}\cdots c_nc_1\cdots c_{i-1})^\infty=S^k(vu)^\infty
\end{displaymath}
with $k=|p|$ forces $k=0$ and thus $u,v\in C^*$.

The fact that every irreducible sofic shift is coded by a circular
code is proved in \cite{BealPerrin1986}. By the above remark,
this follows from Theorem~\ref{theoremMain}.

It is possible to prove Theorem~\ref{theoremMain} with
a different construction using the notion of \emph{state splitting}
(see \cite[Proposition 2.4]{Beal1993}). We do not develop
this proof but we show its steps on the shift of Example \ref{exampleabba}.
\begin{example}

  \begin{figure}[hbt]
\centering
\tikzset{node/.style={circle,draw,minimum size=0.4cm,inner sep=0.4pt}}
\begin{tikzpicture}
  \node[node](3)at(0,0){$3$};\node[node](1)at(1.5,0){$1$};\node[node](2)at(3,0){$2$};

  \draw[above,->,>=stealth,bend right](1)edge node{$b$}(3);
  \draw[above,->,>=stealth,bend right](3)edge node{$a$}(1);
  \draw[above,->,>=stealth,bend left](1)edge node{$a$}(2);
  \draw[above,->,>=stealth,bend left](2)edge node{$b$}(1);

  \node[node](3)at(4,0){$3$};\node[node](1)at(5.5,.5){$1$};
  \node[node](1')at(5.5,-.5){$1'$};\node[node](2)at(7,0){$2$};

\draw[above,->,>=stealth,bend right](1)edge node{$b$}(3);
\draw[above,->,>=stealth,bend left](1')edge node{$b$}(3);
  \draw[above,->,>=stealth,bend left](3)edge node{$a$}(1');
  \draw[above,->,>=stealth,bend left](1)edge node{$a$}(2);
  \draw[above,->,>=stealth,bend left](2)edge node{$b$}(1);
  \draw[above,->,>=stealth,bend right](1')edge node{$a$}(2);
  \node[node](3)at(0,-2){$3$};
  \node[node](3')at(0,-3){$3'$};
  \node[node](1)at(1.5,-2){$1$};
  \node[node](1')at(1.5,-3){$1'$};
  \node[node](2)at(3,-2.5){$2$};

\draw[above,->,>=stealth,bend right](1)edge node{$b$}(3);
\draw[above,->,>=stealth,bend left](1')edge node{$b$}(3');
\draw[above,->,>=stealth,bend left](3)edge node{$a$}(1');
\draw[above,->,>=stealth,bend left](3')edge node{$a$}(1');
  \draw[above,->,>=stealth,bend left](1)edge node{$a$}(2);
  \draw[above,->,>=stealth,bend left](2)edge node{$b$}(1);
  \draw[above,->,>=stealth,bend right](1')edge node{$a$}(2);
  \node[node](3)at(4,-2){$3$};
  \node[node](3')at(4,-3){$3'$};
  \node[node](1)at(5.5,-2){$1$};
  \node[node](1')at(5.5,-3){$1'$};
  \node[node](2)at(7,-2){$2$};
  \node[node](2')at(7,-3){$2'$};

\draw[above,->,>=stealth,bend right](1)edge node{$b$}(3);
\draw[above,->,>=stealth,bend left](1')edge node{$b$}(3');
\draw[above,->,>=stealth,bend left](3)edge node{$a$}(1');
\draw[above,->,>=stealth,bend left](3')edge node{$a$}(1');
  \draw[above,->,>=stealth,bend left](1)edge node{$a$}(2);
  \draw[above,->,>=stealth,bend left](2)edge node{$b$}(1);
  \draw[above,->,>=stealth,bend right](1')edge node{$a$}(2');
  \draw[above,->,>=stealth,bend left](2')edge node{$b$}(1);

\end{tikzpicture}
\caption{The state splitting.}\label{figureStateSplitting}
  \end{figure}
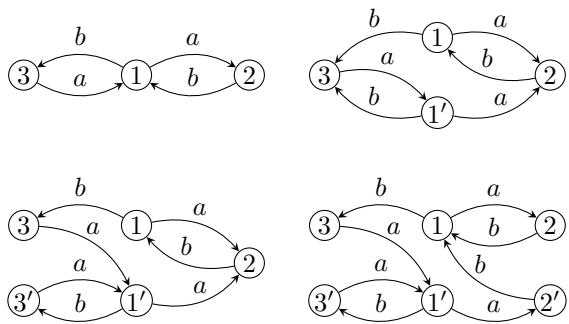
Set $C=\{ab,ba\}$ as in Example~\ref{exampleabba}.
We start with the minimal automaton of $C^*$ shown in Figure~\ref{figureStateSplitting} on the left. We split state $1$ into two states $1$ and $1'$
having the same output but $1$ receives the input edge from
$2$ and $1'$ the input edge from $3$. The result is shown in Figure~\ref{figureStateSplitting} at the top on the right. Then, we split state $3$ into states $3$
and $3'$ as indicated in Figure \ref{figureStateSplitting}
at the bottom on the left.
As a result, a path ends at state $3$ if and only if its
label ends with $bb$. Finally, we spilt $2$ into
$2$ and $2'$. The result is the same as in Example~\ref{exampleabba}.
\end{example}

There exist coded shifts which are unambiguously coded by a prefix code $C$
although $C$ is
not synchronized, as shown by the following example.
\begin{example}\label{exampleDyck}
  Let $A=\{a,b,\bar{a},\bar{b}\}$ and let $D$ be the unique language on $A$ such
  that
  \begin{displaymath}
    D=aD^*\bar{a}\cup bD^*\bar{b}
    \end{displaymath}

    The prefix code $D$ is not synchronized. Indeed, for every $d\in D^*$,
    one has $ad\bar{a}\in D$ although $\bar{a}$ is not in $D^*$. The
    coded shift defined by $D$ is unambiguously coded. Indeed,
    no proper nonempty suffix of an element of $D$ can
    be a prefix of an element of $D^*$. This coded shift
    is known as a \emph{Dyck shift} (see \cite{krieger1974}
    or~\cite{Beal2016}). The fact that $D$ is a circular code
    is proved in~\cite{deLucaRestivo1980}.
\end{example}

A  code $C$ is \emph{very thin} if there is a word $c\in C^*$
such that $c$ is not a factor of $C$. Every rational code is very
thin (see \cite[Theoem 9.4.1]{BerstelPerrinReutenauer2009}).
The prefix code $D$ of Example~\ref{exampleDyck} is not
very thin. Indeed, every $d\in D^*$ is a factor of $ad\bar{a}\in D$.

\begin{theorem}\label{theoremCodedVeryThin}
  A coded shift defined by a very thin prefix code
  is synchronized.
\end{theorem}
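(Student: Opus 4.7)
The plan is to exhibit a \emph{constant} $w$ for $\cL(X)$, that is, a word $w\in\cL(X)$ such that $uw,\,wv\in\cL(X)$ implies $uwv\in\cL(X)$, or equivalently (as recalled before Theorem~\ref{theoremMain}) a word such that all paths labelled $w$ in the minimal automaton of $\cL(X)$ end at the same state. By Proposition~\ref{propositionCharacSynchro}, the existence of such a constant implies that $X$ is a synchronized coded shift.

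Starting from the given very-thin witness $c\in C^*$ (with $c$ not a factor of any codeword of $C$), I write $c=d_1\cdots d_n$ with $d_i\in C$. The structural consequence of the hypothesis is that in every $C^*$-factorisation $\alpha c\beta=e_1\cdots e_m$ the word $c$ must cross at least one codeword boundary, so each occurrence of $c$ as a factor of a $C^*$-word is classified by a \emph{type} $(p,q)$, where $p$ is the proper suffix of the codeword containing $c$'s left endpoint and $q$ is the proper prefix of the codeword containing its right endpoint; the trivial type $(\varepsilon,\varepsilon)$ corresponds to an occurrence already aligned with codeword boundaries.

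The technical core is then to construct a word $w=\alpha_0 c\beta_0\in C^*$, with $\alpha_0,\beta_0$ obtained by concatenating suitable codewords to $c$, so that every occurrence of $w$ inside a $C^*$-word is \emph{bi-aligned}: both its endpoints coincide with codeword boundaries of the ambient factorisation. The extensions $\alpha_0,\beta_0$ are chosen so that any non-trivial type of the occurrence of $c$ inside $w$ would force a pattern mismatch at $\alpha_0$ or $\beta_0$. As an illustration, for $C=\{a^3,a^2b\}$ with $c=a^6$, taking $\alpha_0=\varepsilon$ and $\beta_0=a^2b$ gives $w=a^8b\in C^*$; the terminal letter $b$ can occur in $C^*$ only as the last letter of the codeword $a^2b$, which forces the right endpoint of $w$ to be a codeword boundary, and the length constraint $|w|=9\equiv 0\pmod 3$ then forces the left endpoint to be one too.

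Once such a bi-aligned $w\in C^*$ is in hand, verifying that it is a constant for $\cL(X)$ is immediate. Given $uw,\,wv\in\cL(X)$, pick $C^*$-extensions $\alpha uw\eta$ and $\alpha'wv\beta'$; the bi-alignment of $w$ in each yields $\alpha u\in C^*$ and $wv\beta'\in C^*$, and the concatenation $\alpha u\cdot wv\beta'=\alpha uwv\beta'\in C^*$ then exhibits $uwv$ as a factor of a $C^*$-word, hence $uwv\in\cL(X)$.

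The main obstacle is the construction of the bi-aligned $w$. For finite $C$, a direct case analysis on the finitely many possible offsets of $c$ inside codewords suffices. For infinite $C$, or when several codewords of $C$ share long common prefixes or suffixes, the set of realisable types of $c$ can \emph{a priori} be large, and one must argue that a single pair $(\alpha_0,\beta_0)$ kills all non-trivial types simultaneously. The very-thin hypothesis enters here in an essential way: because $c$ is a fixed word that is not a factor of any codeword, the positions at which $c$ can be slid inside a $C^*$-factorisation are structurally constrained, which is what ultimately keeps the case analysis finite.
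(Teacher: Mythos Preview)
Your approach has a genuine flaw, not merely an unfinished step. You are attempting to produce a word $w\in C^*$ that is \emph{bi-aligned}, i.e.\ whose every occurrence in a $C^*$-word has both endpoints at codeword boundaries. This is exactly the statement that $w$ is a synchronizing word for $C$ in the sense of~\eqref{eqSynchro}. But a very thin prefix code need not possess any synchronizing word. Take $C=\{aa,ab,ba,bb\}$: it is a prefix code, and it is very thin since $c=aaaa\in C^*$ is not a factor of any codeword (all of which have length $2$). Yet no $w\in C^*$ is bi-aligned, because codeword boundaries lie only at even positions while any fixed $w$ occurs at positions of both parities in $\{a,b\}^*$. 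The shift coded by $C$ is the full shift, which is certainly synchronized---but by the code $\{a,b\}$, not by $C$. So the step you flag as ``the main obstacle'' is not just hard; it is impossible in general, and no amount of case analysis on the finitely many types of $c$ will salvage it.

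The paper's proof sidesteps this by never asking $C$ itself to be synchronized. It works in the minimal automaton $\A=(Q,i,i)$ of $C^*$ and observes that for the very-thin witness $w$, the image set $I(w)=\{p\cdot w\mid p\in Q\}$ is finite: since $w$ is not a factor of any codeword, every path labelled $w$ must pass through $i$, whence $p\cdot w=i\cdot s$ for some suffix $s$ of $w$. This finiteness is precisely your observation that $c$ has only finitely many ``types''. The crucial extra step is Lemma~\ref{lemmaFischer}: from finiteness of $I(w)$ one passes to a new automaton on the minimal-cardinality image sets $I(wu)$, which recognises $\cL(X)$ (not $C^*$) and is synchronized. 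In the example $C=\{aa,ab,ba,bb\}$ this new automaton collapses to a single state. The conclusion is that $X$ is a synchronized coded shift, possibly via a different prefix code than $C$. What you are missing is exactly this passage to a quotient: you must allow the finitely many types to be merged rather than insist that all but the trivial one be eliminated within $C^*$.
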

\begin{proof}
  Assume that $X$ is coded by a very thin prefix code $C$.
  Let $\A=(Q,i,i)$ be the minimal automaton of $C^*$.
  For $w\in A^*$, set
  \begin{displaymath}
    I(w)=\{q\in Q\mid p\cdot w=q\mbox{ for some $p\in Q$}\}.
  \end{displaymath}
  Let $w\in C^*$ be a word which is not a factor of $C$.
  Then the set $I(w)$
    is finite. Indeed, assume that $p\cdot w=q$. Let $u,v$ be such
  that $i\cdot u=p$ and $q\cdot v=i$. Then $uwv\in C^*$ forces
  $w=rs$ with $ur,sv\in C^*$ and thus $p\cdot w=i\cdot s$. This shows that $I(w)$ is contained
  in the finite set $\{i\cdot s\mid \mbox{ $s$ is a suffix of $w$}\}$.

By Lemma~\ref{lemmaFischer}, this implies that $X$ is synchronized.
\end{proof}

\begin{example}
  A $\beta$-shift $X_\beta$ is synchronized if and only if the orbit
  of $d_\beta(1)$ is not dense in $X_\beta$.
  One of the implications of this result, due to \cite{Bertrand-Mathis1986}, can be proved easily using
  Theorem~\ref{theoremCodedVeryThin}. Indeed, let $C$
  be the prefix code formed by the labels of simple paths
  from $0$ to $0$ in the automaton of Figure~\ref{figureBetaShift}. Let
  $w$ be a word in $\cL(X_\beta)$ which does not appear in $d_\beta(1)$,
  and let $a$ be a letter such that $wa\in\cL(X_\beta)$. Then
  $wa$ is a factor of $C^*$ but not a factor of $C$ and thus $C$ is very thin.

\end{example}
\bibliographystyle{plain}
\bibliography{coded.bib}
\end{document}